\titleformat{\section}[block]
 {\bfseries}
 {\thesection.}
 {\fontdimen2\font}
 {}
\newtheorem{theorem}{Theorem}[section]
\newtheorem{lemma}[theorem]{Lemma}
\newtheorem{corollary}[theorem]{Corollary}
\newtheorem{proposition}[theorem]{Proposition}
\DeclareMathOperator{\R}{\mathbb{R}}
\DeclareMathOperator\sto{\leadsto}
\DeclareMathOperator{\uhr}{\upharpoonright}  
\DeclareMathOperator{\sel}{\mathnormal{se}\mkern-1.5mu\ell}
\DeclareMathOperator{\cws}{cws}
\DeclareMathOperator{\ws}{ws}
\DeclareMathOperator{\diam}{diam}
\DeclareMathOperator{\node}{node}
\renewcommand{\emptyset}{\varnothing}
\setlist{noitemsep}
\numberwithin{equation}{section} 
\begin{document}

\author{Valentin Gutev}

\address{Department of Mathematics, Faculty of Science, University of
   Malta, Msida MSD 2080, Malta}
\email{valentin.gutev@um.edu.mt}

\subjclass[2010]{54B20, 54C65, 54D05, 54E35, 54F05, 54F50}

\keywords{Continuous weak selection, selection topology,
  semi-orderable space, suborderable metrizable space, anti-binary
  tree.}

\title{Weak Selections and Suborderable Metrizable Spaces}

\begin{abstract}
  Each continuous weak selection for a space $X$ defines a coarser
  topology on $X$, called a selection topology. Spaces whose topology
  is determined by a collection of such selection topologies are
  called continuous weak selection spaces. For such spaces,
  Garc\'{\i}a-Ferreira, Miyazaki, Nogura and Tomita considered the
  minimal number $\cws(X)$ of selection topologies which generate the
  original topology of $X$, and called it the cws-number of $X$. In
  this paper, we show that $\cws(X)\leq 2$ for every semi-orderable
  space $X$, and that $\cws(X)=2$ precisely when such a space $X$ has
  two components and is not orderable. Complementary to this result,
  we also show that $\cws(X)=1$ for each suborderable metrizable space
  $X$ which has at least 3 components.
\end{abstract}

\dedicatory{Dedicated to Professor Salvador Garc\'{\i}a-Ferreira on
  the occasion of his 60th birthday}

\date{\today}
\maketitle

\section{Introduction}

All spaces in this paper are assumed to be Hausdorff. For a space $X$,
let
\[
  \mathscr{F}_2(X)=\{S\subset X: 1\leq |S| \leq 2\}.
\]
A map $\sigma:\mathscr{F}_2(X)\to X$ is a \emph{weak selection} for
$X$ if $\sigma(S)\in S$ for every $S\in \mathscr{F}_2(X)$. Every weak
selection $\sigma$ generates an order-like relation $\leq_\sigma$ on
$X$ \cite[Definition 7.1]{michael:51} defined by $x\leq_\sigma y$ if
$\sigma(\{x,y\})=x$, and we often write $x<_\sigma y$ to express that
$x\leq_\sigma y$ and $x\neq y$. The relation $\leq_\sigma$ is very
similar to a linear order on $X$, but is not necessarily transitive. A
weak selection $\sigma$ for $X$ is \emph{continuous} if it is
continuous with respect to the Vietoris topology on
$\mathscr{F}_2(X)$, which can be expressed by the property that for
every $x,y\in X$ with $x<_\sigma y$, there are open sets
$U,V\subset X$ such that $x\in U$, $y\in V$ and $s<_\sigma t$ for
every $s\in U$ and $t\in V$, see \cite[Theorem
3.1]{gutev-nogura:01a}. Continuity of a weak selection $\sigma$
implies that all $\leq_\sigma$-open intervals
$(\leftarrow, x)_{\leq_\sigma}=\{y\in X: y<_\sigma x\}$ and
$(x,\to)_{\leq_\sigma}= \{y\in X: x<_\sigma y\}$, $x\in X$, are open
in $X$ \cite{michael:51}, but the converse is not necessarily true
\cite[Example 3.6]{gutev-nogura:01a} (see also \cite[Corollary 4.2 and
Example 4.3]{gutev-nogura:09a}). For an extended review of (weak)
hyperspace selections, the interested reader is refereed to
\cite{gutev-2013springer}. \medskip

If $\sigma$ is a continuous weak selection for $X$, then it remains
continuous with respect to any other topology on $X$ which is finer
than the original one \cite[Corollary 3.2]{gutev-nogura:01a}. Looking
for a possible coarsest topology with this property, a natural
topology $\mathscr{T}_{\sigma}$ on $X$ was associated to $\sigma$ in
\cite{gutev-nogura:01a}. It was called a \emph{selection topology},
and was defined following exactly the pattern of the usual open
interval topology utilising the collection of $\leq_\sigma$-open
intervals
$\mathscr{S}_{\sigma}=\big\{(\gets,x)_{\leq_\sigma},
(x,\to)_{\leq_\sigma}: x\in X\big\}$ as a subbase. It was shown in
\cite{gutev-nogura:03b} that $\mathscr{T}_{\sigma}$ is regular, and in
\cite{hrusak-martinez:09b} that $\mathscr{T}_{\sigma}$ is also
Tychonoff. Some pathological examples of continuous weak selections
that are not continuous with respect to the selection topology they
generate were given in
\cite{artico-marconi-pelant-rotter-tkachenko:02,gutev-nogura:01a} (see
also \cite{gutev-nogura:03b,gutev-nogura:09a}).  Subsequently,
answering a question of \cite{gutev-nogura:06b}, it was shown in
\cite{hrusak-martinez:09b} that if there is a coarsest topology on a
given set so that a weak selection defined on it is continuous, then
this topology must be precisely the selection topology determined by
the weak selection itself.\medskip

Regarding the distinction between the original topology and a
selection topology, the following spaces $(X,\mathscr{T})$ were
studied in \cite{hrusak-martinez:09b}: \emph{weakly determined by
  selections} if $X$ admits a weak selection $\sigma$ with
$\mathscr{T}=\mathscr{T}_\sigma$; \emph{determined by selections} if
$X$ admits a continuous weak selection $\sigma$ with
$\mathscr{T}=\mathscr{T}_\sigma$; and \emph{strongly determined by
  selections} if $X$ admits a continuous weak selection and
$\mathscr{T}=\mathscr{T}_\sigma$ for every continuous weak selection
$\sigma$ for $X$. Every orderable space is determined by selections,
and it was shown in \cite[Example 3.4]{hrusak-martinez:09b} that so
also is the Sorgenfrey line, which is suborderable but not
orderable. However, there are suborderable spaces which are not
determined by selections, for instance such a space is the subspace
\begin{equation}
  \label{eq:sel-top-30:1}
  X=(0,1)\cup\{2\}\subset \R.
\end{equation}
Every connected locally connected space which admits a continuous weak
selection is strongly determined by selections (see
\cite{nogura-shakhmatov:97a}); and every compact space that admits a
continuous weak selection is also strongly determined by
selections. Answering a question of \cite{gutev-nogura:06b}, it was
shown in \cite[Example 3.8]{hrusak-martinez:09b} that there is a space
which is strongly determined by selections and yet it is neither
(locally) compact nor (locally) connected. \medskip

The idea of spaces determined by selections was generalised in
\cite{garcia-nogura-2011a}. For a set $X$ and a family
$\{\mathscr{T}_\alpha: \alpha\in \mathscr{A}\}$ of topologies on $X$,
the \emph{supremum topology}
$\bigvee_{\alpha\in \mathscr{A}}\mathscr{T}_\alpha$ is the smallest
topology on $X$ which contains all topologies $\mathscr{T}_\alpha$,
$\alpha\in \mathscr{A}$. A topological space $(X,\mathscr{T})$ is a
(\emph{continuous}) \emph{weak selection space}
\cite{garcia-nogura-2011a} if
$\mathscr{T}=\bigvee_{\sigma\in \Sigma}\mathscr{T}_\sigma$ for some
collection $\Sigma$ of (continuous) weak selections for $X$. Some
basic properties of these spaces, also several examples, were provided
in \cite{garcia-nogura-2011a,MR3430989}.\medskip

For a continuous weak selection space $(X,\mathscr{T})$, the
\emph{cws-number} of $X$, denoted by $\cws(X)$,
\cite{garcia-nogura-2011a} is the minimal cardinality of a collection
$\Sigma$ of continuous weak selections for $X$ with
$\mathscr{T}=\bigvee_{\sigma\in \Sigma}\mathscr{T}_\sigma$. Similarly,
for a weak selection space $(X,\mathscr{T})$, the \emph{ws-number} of
$X$ \cite{garcia-nogura-2011a} is the minimal cardinality $\ws(X)$ of
a collection $\Sigma$ of weak selections for $X$ with
$\mathscr{T}=\bigvee_{\sigma\in \Sigma}\mathscr{T}_\sigma$. In these
terms, a space $X$ is weakly determined by selections iff $\ws(X)=1$,
and $X$ is determined by selections iff $\cws(X)=1$. Thus, every
orderable space $X$ satisfies $\cws(X)=1$, but the converse is not
necessarily true. The Sorgenfrey line $S$ is an example of a
non-orderable, suborderable space with $\cws(S) = 1$
\cite{hrusak-martinez:09b}. Readers who are more familiar with the
Michael line $M$ can use it as another example of a non-orderable
suborderable space with $\cws(M)=1$ \cite{garcia-nogura-2011a}. In
contrast, the space $X$ in \eqref{eq:sel-top-30:1} is a suborderable
space with $\cws(X)=2$ \cite{garcia-nogura-2011a}. In this regard, it
was shown in \cite{shimpei:2012} that $\cws(X)\leq 2$ for every
subspace $X\subset \R$. In fact, it was shown in \cite{shimpei:2012}
that $\cws(X)=2$ if and only if $X$ has exactly two connected
components one of which is compact while the other is an open interval
of $\R$, compare with \eqref{eq:sel-top-30:1}. \medskip

In this paper, we aim to extend the results of \cite{shimpei:2012} in
two directions, and give some natural explanation of the behaviour of
the cws-number in these cases. A space $X$ is \emph{semi-orderable}
\cite{gutev:07b} if it has a clopen partition into two orderable
spaces or, equivalently, if it is a topological sum of two orderable
spaces. Every semi-orderable space is suborderable, while the
Sorgenfrey line and the Michael line are suborderable but not
semi-orderable \cite[Example 4.12]{gutev:07b}. One of the simplest
examples of a semi-orderable space which is not orderable is the space
$X$ in \eqref{eq:sel-top-30:1}.  The importance of semi-orderable
spaces was justified by the fact that a space $X$ is semi-orderable if
and only if it is a topological sum of orderable spaces \cite[Theorem
4.2]{gutev:07b}. In the next section, we show that $\cws(X)\leq 2$ for
every semi-orderable space $X$ (Theorem
\ref{theorem-sel-top-25:1}). Furthermore, we show that, in this case,
$\cws(X)=2$ precisely when $X$ has two components and is not
orderable. This result is based on special ``discretely circular''
weak selections which transform the elements of a partition of a set
$X$ into a discrete partition with respect to the selection topology
they generate, see Lemma \ref{lemma-sel-top-4:1}. In the rest of the
paper, we extend this construction of weak selections from a single
partition to a system of partitions, see Theorem
\ref{theorem-sel-top-23:1}. Based on this, we show that $\cws(X)=1$
for each suborderable metrizable space $X$ which has at least 3
components, see Theorem \ref{theorem-sel-top-19:1}. In fact, if such a
space has finitely many components or is locally connected, then it is
a topological sum of its components, hence it is semi-orderable. Thus,
the essential case in this result is when $X$ contains a non-open
component. 

\section{Invariant Weak Selections}
\label{sec:weak-select-part}

For a partition $\mathscr{P}$ on a set $X$ and $x\in X$, let
$\mathscr{P}[x]\in \mathscr{P}$ be the unique element with
${x\in \mathscr{P}[x]}$. For a space $X$, the \emph{components}
(sometimes called \emph{connected components}) are the maximal
connected subsets of $X$. They form a closed partition $\mathscr{C}$
of $X$, and each element $\mathscr{C}[x]\in \mathscr{C}$ corresponding
to a point $x\in X$ is called the \emph{component} of this point. The
\emph{quasi-component} $\mathscr{Q}[x]$ of a point $x\in X$ is the
intersection of all clopen subsets of $X$ containing this point. The
quasi-components also form a closed partition $\mathscr{Q}$ of $X$,
thus they are simply called the \emph{quasi-components} of $X$.  We
always have $\mathscr{C}[x]\subset \mathscr{Q}[x]$, but the converse
is not necessarily true. However, if $X$ has a continuous weak
selection, then $\mathscr{C}[x]= \mathscr{Q}[x]$ for every $x\in X$,
\cite[Theorem 4.1]{gutev-nogura:00b}. In the sequel, we will freely
rely on this fact.\medskip

In the present section, we will prove the following theorem
dealing with the cws-number of semi-orderable spaces.

\begin{theorem}
  \label{theorem-sel-top-25:1}
  If $X$ is a semi-orderable space, then $\cws(X)\leq 2$. Moreover,
  $\cws(X)=2$ if and only if $X$ is non-orderable and
  has at most 2 components. 
\end{theorem}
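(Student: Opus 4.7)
The plan is to establish the three assertions of the theorem in the order stated. For $\cws(X)\leq 2$, fix a semi-orderable decomposition $X=X_1\sqcup X_2$ witnessed by linear orders $\leq_1,\leq_2$ (if $X_2=\emptyset$ there is nothing to do). Define $\sigma_1$ by the concatenation that places $X_1$ below $X_2$ and uses $\leq_i$ internally, and $\sigma_2$ by the reverse concatenation. Both are continuous: inside each $X_i$ continuity comes from $\leq_i$, and between $X_1,X_2$ the comparisons are witnessed by the clopen sets $X_1,X_2$ themselves. The inclusion $\mathscr T_{\sigma_1}\vee\mathscr T_{\sigma_2}\subseteq\mathscr T_X$ is automatic, and for the reverse I will take $x\in X_1$ and split into four cases: if $x$ is neither $\leq_1$-maximum nor $\leq_1$-minimum of $X_1$, a $\sigma_1$-interval with both endpoints in $X_1$ already gives a neighborhood in $X_1$; if $x$ is the $\leq_1$-maximum of $X_1$, then $x$ is the global $\leq_{\sigma_2}$-maximum and a one-sided $\sigma_2$-ray anchored at some $a\in X_1$ restricts to $(a,x]_{<_1}$; the minimum case is symmetric via $\sigma_1$; and when $X_1=\{x\}$ is a singleton, two rays, one from each selection, anchored at the same auxiliary point of $X_2$, intersect to $\{x\}$.

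For the ``if'' half of the moreover, suppose $X$ is non-orderable with at most two components. A connected semi-orderable space is orderable (its clopen partition must collapse), so $X$ has exactly two components $C_1,C_2$, and these must coincide with $X_1$ and $X_2$; in particular $C_1,C_2$ are connected orderable spaces. Assume for contradiction that a single continuous weak selection $\sigma$ satisfies $\mathscr T_\sigma=\mathscr T_X$. For each fixed $y\in C_2$ the sets $\{x\in C_1:x<_\sigma y\}$ and $\{x\in C_1:y<_\sigma x\}$ are open by continuity of $\sigma$; connectedness of $C_1$ forces one of them to be empty, and varying $y$ together with the symmetric argument on $C_2$ produces, after relabeling, the uniform rule $C_1<_\sigma C_2$. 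Invoking the standard fact that a continuous weak selection on a connected space is induced by a linear order whose order topology equals the selection topology, $\leq_\sigma$ is a linear order on $X$ obtained by concatenating the two component orders, and $\mathscr T_\sigma$ is its order topology; then $\mathscr T_\sigma=\mathscr T_X$ makes $X$ orderable, contradicting the hypothesis.

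For the converse, assume $\cws(X)=2$. Non-orderability is immediate. If $X$ had at least three components then, since $X=X_1\sqcup X_2$, at least one of $X_1,X_2$, say $X_1$, is disconnected. Any orderable disconnected space admits a convex clopen $2$-partition (cut at an order jump or at a Dedekind gap), so I will split $X_1=Y\sqcup Y'$ into two non-empty clopen pieces, each convex in $\leq_1$ and hence orderable in the subspace topology. Together with $X_2$ this gives a clopen tripartition $\mathscr P=\{Y,Y',X_2\}$ of $X$ into orderable subspaces. Applying Lemma~\ref{lemma-sel-top-4:1}, the discretely circular construction, to $\mathscr P$ produces a single continuous weak selection $\sigma$ whose selection topology both discretizes $\mathscr P$ and realizes the order topology on each part, so $\mathscr T_\sigma=\mathscr T_X$ and $\cws(X)=1$, contradicting $\cws(X)=2$.

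The main obstacle is the converse step. Its technical heart is Lemma~\ref{lemma-sel-top-4:1}: one must exhibit a single continuous weak selection whose selection topology simultaneously isolates three clopen pieces as a discrete partition \emph{and} recovers each piece's internal order topology. Once this is granted, the $\cws(X)\leq 2$ inequality is a direct neighborhood computation, and the ``if'' direction of the moreover is driven by the single observation that connectedness forces any clopen bipartition of a component to be trivial.
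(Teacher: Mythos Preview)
Your proof is correct and follows the same overall architecture as the paper: the same two concatenation selections for $\cws(X)\leq 2$, the same reduction to a linear order $\leq_\sigma$ in the two-component case, and the same appeal to Lemma~\ref{lemma-sel-top-4:1} once a clopen partition into at least three orderable pieces is in hand.

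The one substantive difference is how that tripartition is produced. The paper uses that components coincide with quasi-components, separates three components by disjoint clopen sets $U_1,U_2,U_3$, and then invokes an external result (that open subsets of semi-orderable spaces are semi-orderable) to refine each $U_i$ into orderable clopen pieces. Your route is more direct: since at least one of the orderable summands $X_1,X_2$ is disconnected, you cut it at an order jump or Dedekind gap to get two convex (hence orderable) clopen halves, and combine with the other summand. This avoids the external citation and the quasi-component machinery, at the cost of needing the (standard) fact that a disconnected LOTS admits a convex clopen $2$-partition. Both arrive at the same place and feed Lemma~\ref{lemma-sel-top-4:1} in the same way.

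A minor stylistic remark on your first part: the paper disposes of $\mathscr{T}_X\subseteq\mathscr{T}_{\sigma_1}\vee\mathscr{T}_{\sigma_2}$ in one line by observing that $X_1=(\leftarrow,y)_{\leq_{\sigma_1}}\cap(y,\to)_{\leq_{\sigma_2}}$ for any $y\in X_2$, after which the internal order topologies take over. Your four-case endpoint analysis reaches the same conclusion but is more laborious than necessary.
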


In what follows, $\sel_2(X)$ is the collection of all weak selections
for set $X$. Following the standard notations with linear orders, for
$g\in \sel_2(X)$ and subsets $A,B\subset X$, we write $A\leq_g B$ if
$x\leq_g y$ for every $x\in A$ and $y\in B$. Similarly, $A<_g B$ means
that $x<_g y$ for every $x\in A$ and $y\in B$. Moreover, if one of the
sets is a singleton, say $A=\{x\}$, then we will simply write
$x\leq B$ and, respectively, $x<_g B$. Finally, based on this, we will
also consider the following $\leq_g$-open intervals associated to a
(nonempty) subset $A\subset X$.
\begin{equation}
  \label{eq:sel-top-23:2}
  (\gets,A)_{\leq_g}=\{x\in X:x<_g A\}\quad \text{and}\quad
  (A,\to)_{\leq_g}= \{x\in X: A<_g x\}.
\end{equation}
Let us remark that in contrast to the $\leq_g$-open intervals associated
to the points of $X$, the intervals associated to subsets of $X$ may
fail to be open even when $g$ is a continuous weak selection for
$X$. \medskip 

For $g\in \sel_2(X)$, a family $\mathscr{P}$ of subsets of $X$ is
called \emph{$\leq_g$-decisive} or, merely, \emph{$g$-decisive}
\cite{gutev:08a} if $P<_g Q$ or $Q<_g P$ for every
${P,Q\in \mathscr{P}}$ with $P\neq Q$. Evidently, a $g$-decisive
family $\mathscr{P}$ must be pairwise disjoint. The proof of Theorem
\ref{theorem-sel-top-25:1} is based on weak selections which are
``decisive''-invariant with respect to a partition $\mathscr{P}$ of
$X$. Namely, for a partition $\mathscr{P}$ of $X$, we shall say that a
weak selection $g$ for $X$ is \emph{$\mathscr{P}$-invariant} if the
family $\mathscr{P}$ is $g$-decisive. Evidently, such a selection $g$
defines a natural weak selection for $\mathscr{P}$ which is
represented by the same selection relation $\leq_g$, in other words
$g\in \sel_2(\mathscr{P})$. The converse is also true.  Namely, to
each $\sigma\in\sel_2(\mathscr{P})$ and $h_P\in\sel_2(P)$,
$P\in \mathscr{P}$, we may associate a unique $\mathscr{P}$-invariant
weak selection $\sigma* h\in\sel_2(X)$ which is identical to $\sigma$
on the partition $\mathscr{P}$, and identical to $h_P$ on each element
$P\in \mathscr{P}$. For $x,y\in X$, it is defined by
$x<_{\sigma* h} y$ if $\mathscr{P}[x]<_\sigma \mathscr{P}[y]$ and
$\sigma* h(\{x,y\})=h_P(\{x,y\})$ if
$\mathscr{P}[x]=P=\mathscr{P}[y]$. The resulting weak selection
$\sigma*h$ will be found very useful in the proof of Theorem
\ref{theorem-sel-top-25:1}, in fact in destroying transitivity of a
linear order on $X$. To this end, we shall say that a weak selection
$\sigma$ for a set $Z$ is \emph{circular} on $x,y,z\in Z$ if
$x<_\sigma y<_\sigma z<_\sigma x$. In the sequel, every such triple
will be called \emph{$\sigma$-circular} or, merely, \emph{circular}.

\begin{proposition}
  \label{proposition-sel-top-32:1}
  Let $\mathscr{P}$ be a partition of $X$,
  $\mathscr{Q}\subset \mathscr{P}$ be a triple and $\sigma$ be a weak
  selection for $\mathscr{P}$ such that $\mathscr{Q}$ is
  $\sigma$-circular and $P<_\sigma \mathscr{Q}$ or
  $\mathscr{Q}<_\sigma P$, for every $P\in \mathscr{P}$ with
  $P\notin \mathscr{Q}$. If $h_P\in\sel_2(P)$, $P\in \mathscr{P}$,
  then 
  \[
    \bigcup(\gets,\mathscr{Q})_{\leq_\sigma}\in
    \mathscr{T}_{\sigma*h},\quad \mathscr{Q}\subset
    \mathscr{T}_{\sigma* h}\quad \text{and}\quad  
    \bigcup(\mathscr{Q},\to)_{\leq_\sigma}\in \mathscr{T}_{\sigma*h}.
    \]
\end{proposition}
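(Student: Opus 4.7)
The plan is to realise each of the three target sets --- $\bigcup(\gets,\mathscr{Q})_{\leq_\sigma}$, each $Q\in\mathscr{Q}$, and $\bigcup(\mathscr{Q},\to)_{\leq_\sigma}$ --- as a \emph{finite} intersection of members of the subbase $\mathscr{S}_{\sigma*h}$, so that membership in $\mathscr{T}_{\sigma*h}$ follows by definition. To set things up, I would index $\mathscr{Q}=\{Q_1,Q_2,Q_3\}$ so that $Q_1<_\sigma Q_2<_\sigma Q_3<_\sigma Q_1$, and write $L=\bigcup(\gets,\mathscr{Q})_{\leq_\sigma}$ and $R=\bigcup(\mathscr{Q},\to)_{\leq_\sigma}$. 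The hypothesis on $P\in\mathscr{P}\setminus\mathscr{Q}$ then yields a disjoint decomposition $X=L\cup Q_1\cup Q_2\cup Q_3\cup R$ in which every block comprising $L$ lies $\sigma$-below each $Q_i$ and every block comprising $R$ lies $\sigma$-above each $Q_i$.

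Next I would pick representatives $y_i\in Q_i$ (possible since blocks of a partition are nonempty) and read off, straight from the definition of $\sigma*h$, the six subbasic intervals based at these points. Using the circular order on $\mathscr{Q}$ together with $L<_\sigma\mathscr{Q}<_\sigma R$, a block-by-block inspection gives
\begin{align*}
(\gets,y_1)_{\leq_{\sigma*h}} &= L\cup Q_3\cup\{w\in Q_1:w<_{h_{Q_1}}y_1\},\\
(\gets,y_2)_{\leq_{\sigma*h}} &= L\cup Q_1\cup\{w\in Q_2:w<_{h_{Q_2}}y_2\},\\
(\gets,y_3)_{\leq_{\sigma*h}} &= L\cup Q_2\cup\{w\in Q_3:w<_{h_{Q_3}}y_3\},
\end{align*}
together with the three symmetric formulae for $(y_i,\to)_{\leq_{\sigma*h}}$. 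Intersecting the three left-intervals collapses the three tail pieces (which sit in pairwise distinct blocks $Q_1,Q_2,Q_3$) and leaves exactly $L$, so $L\in\mathscr{T}_{\sigma*h}$. The intersection of the three right-intervals yields $R\in\mathscr{T}_{\sigma*h}$ by the symmetric argument. Finally, the identity
\[
  Q_i=(\gets,y_{i+1})_{\leq_{\sigma*h}}\cap (y_{i-1},\to)_{\leq_{\sigma*h}}
\]
(indices modulo $3$) is read off by the same case-check and gives $\mathscr{Q}\subset\mathscr{T}_{\sigma*h}$.

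The only genuinely delicate step is the bookkeeping in the six-line display: because a weak selection on $\mathscr{P}$ is a tournament rather than a linear order, $Q_2<_\sigma Q_3$ already forces $Q_3\not<_\sigma Q_2$, so, for instance, $Q_3$ appears $\sigma$-below $y_1$ but not below $y_2$, and the three tail pieces do not re-combine. Once the cyclic indices are handled correctly, no further obstacle arises.
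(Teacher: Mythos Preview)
Your proposal is correct and follows essentially the same route as the paper: pick a representative in each member of the circular triple and realise each target set as a finite intersection of subbasic $\leq_{\sigma*h}$-intervals based at those representatives. The paper argues one case and appeals to the cyclic symmetry of $\mathscr{Q}$, whereas you spell out all six intervals and the explicit formula $Q_i=(\gets,y_{i+1})_{\leq_{\sigma*h}}\cap(y_{i-1},\to)_{\leq_{\sigma*h}}$, but the underlying computation is identical.
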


\begin{proof}
  Take weak selections $h_P\in \sel_2(P)$, $P\in \mathscr{P}$, and let
  $g=\sigma* h$ be the corresponding $\mathscr{P}$-invariant weak
  selection for $X$. Also, assume that $\mathscr{Q}=\{Q,R,S\}$ and
  $Q<_\sigma R<_\sigma S<_\sigma Q$. Since we cannot distinguish the
  elements of $\mathscr{Q}$ with respect to the selection relation
  $\leq_\sigma$, it suffices to show that one of them belongs to
  $\mathscr{T}_g$, for instance that $R\in \mathscr{T}_g$. To this
  end, take points $q\in Q$ and $s\in S$.  Since $g$ is
  $\mathscr{P}$-invariant, we have that ${S<_g q<_g R<_g s<_g Q}$ and
  $\{q,s\}<_g P$ or $P<_g \{q,s\}$, whenever
  $P\in \mathscr{P}\setminus \mathscr{Q}$. Consequently,
  $R= (q,s)_{\leq_g}\in \mathscr{T}_g$. To see the other property,
  take also a point $r\in R$. Then $Q<_g r<_g S$ and and $r<_g P$ or
  $P<_g r$, whenever $P\in \mathscr{P}\setminus \mathscr{Q}$. Taking
  in mind that the points $q\in Q$ and $s\in S$ have virtually the
  same property, we get that
  \[
    \bigcup (\gets,\mathscr{Q})_{\leq_\sigma}=\bigcup\{P\in
    \mathscr{P}:P<_\sigma\mathscr{Q}\}=\bigcap_{x\in\{q,r,s\}}(\gets,x)_{\leq_g}\in
    \mathscr{T}_g.
  \]
  Similarly, it follows that
  $\bigcup(\mathscr{Q},\to)_{\leq_\sigma}\in \mathscr{T}_g$, see
  \eqref{eq:sel-top-23:2}.
\end{proof}

We now refine the construction in Proposition
\ref{proposition-sel-top-32:1} extending it to an entire partition
$\mathscr{P}$ of $X$.

\begin{lemma}
  \label{lemma-sel-top-4:1}
  If $\mathscr{P}$ is a partition of $X$ with $|\mathscr{P}|\neq 2$,
  then it has a weak selection $\sigma$ such that
  $\mathscr{P}\subset \mathscr{T}_{\sigma* h}$, for every collection
  of weak selections $h_P\in\sel_2(P)$, $P\in \mathscr{P}$. If
  moreover $\mathscr{P}$ is an open partition of $X$ and each
  $h_P\in\sel_2(P)$, $P\in \mathscr{P}$, is continuous, then
  $\sigma* h$ is also continuous.
\end{lemma}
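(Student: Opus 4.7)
The plan is to reduce the statement to Proposition~\ref{proposition-sel-top-32:1} applied to a family of triples that together cover $\mathscr{P}$. The cases $|\mathscr{P}|\leq 1$ are immediate, so I focus on $|\mathscr{P}|\geq 3$, and the first task is to construct a suitable $\sigma\in\sel_2(\mathscr{P})$.

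I would decompose $\mathscr{P}$ into a linearly ordered family $\mathfrak{B}$ of pairwise disjoint blocks, each block being either a triple or a singleton, with singletons placed only at the bottom or the top of $\mathfrak{B}$. When $|\mathscr{P}|$ is infinite, the equality $|\mathscr{P}|=3\cdot|\mathscr{P}|$ lets me use triples only; when $|\mathscr{P}|$ is finite, I use $\lfloor |\mathscr{P}|/3\rfloor$ triples together with $r=|\mathscr{P}|\bmod 3\in\{0,1,2\}$ singletons placed at the extremes (one at each end when $r=2$). The assumption $|\mathscr{P}|\neq 2$ is exactly what is needed here: the forbidden case would produce two singleton blocks with no triple to flank them. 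I would then define $\sigma$ by the rule that elements of an earlier block are $<_\sigma$ elements of a later block, while choosing an orientation $Q<_\sigma R<_\sigma S<_\sigma Q$ inside each triple block $\{Q,R,S\}$.

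To verify $\mathscr{P}\subset \mathscr{T}_{\sigma*h}$ for an arbitrary choice $h=(h_P)_{P\in \mathscr{P}}$, I would apply Proposition~\ref{proposition-sel-top-32:1} to each triple block $\mathscr{Q}$: by the block order, every $P'\in\mathscr{P}\setminus\mathscr{Q}$ sits in another block and so satisfies $P'<_\sigma\mathscr{Q}$ or $\mathscr{Q}<_\sigma P'$, which yields $\mathscr{Q}\subset \mathscr{T}_{\sigma*h}$ together with the openness of $\bigcup(\gets,\mathscr{Q})_{\leq_\sigma}$ and $\bigcup(\mathscr{Q},\to)_{\leq_\sigma}$. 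Taking $\mathscr{Q}$ to be the lowest triple block, the first union collapses to the bottom singleton (if present); the top singleton is handled symmetrically. The main obstacle I anticipate is ensuring that the decomposition really works in every cardinality regime and that no inner singletons are ever forced, since an inner singleton would be sandwiched between blocks and could not be carved out of $\mathscr{T}_{\sigma*h}$ using $\sigma$ alone.

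For the final clause, assume $\mathscr{P}$ is an open partition and each $h_P$ is continuous. Given $x<_{\sigma*h}y$, if $\mathscr{P}[x]\neq\mathscr{P}[y]$ I take $U=\mathscr{P}[x]$ and $V=\mathscr{P}[y]$: both are open, and on $U\times V$ the value of $\sigma*h$ is determined by $\sigma$ on $\{\mathscr{P}[x],\mathscr{P}[y]\}$, so $s<_{\sigma*h}t$ for all $s\in U$ and $t\in V$. If $\mathscr{P}[x]=\mathscr{P}[y]=P$, continuity of $h_P$ inside $P$ produces $P$-open neighbourhoods of $x$ and $y$ witnessing the inequality, and these are open in $X$ because $P$ is.
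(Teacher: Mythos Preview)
Your proposal is correct and follows essentially the same approach as the paper: decompose $\mathscr{P}$ into a linearly ordered family of circular triples together with at most two singleton blocks placed at the extremes, and then invoke Proposition~\ref{proposition-sel-top-32:1} for each triple to see that every member of $\mathscr{P}$ is $\mathscr{T}_{\sigma*h}$-open. Your treatment is in fact slightly more complete than the paper's, since you also spell out the continuity verification of $\sigma*h$ in the final clause, which the paper leaves implicit.
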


\begin{proof}
  Let $X$ and $\mathscr{P}$ be as in Lemma \ref{lemma-sel-top-4:1}. If
  $\mathscr{P}$ is a singleton, there is nothing to prove because
  $X\in \mathscr{T}_g$, for any weak selection $g\in
  \sel_2(X)$. Suppose that $|\mathscr{P}|\geq 3$.  If $\mathscr{P}$ is
  infinite, then it has a partition $\Omega$ consisting of triples of
  $\mathscr{P}$. Take a linear order $\leq$ on $\Omega$ and define a
  weak selection $\sigma$ for $\mathscr{P}$ such that each triple
  $\mathscr{Q}\in \Omega$ is $\sigma$-circular and
  $\mathscr{Q}<_\sigma \mathscr{R}$, whenever
  $\mathscr{Q}< \mathscr{R}$ for $\mathscr{Q},\mathscr{R}\in
  \Omega$. In other words, $\sigma$ is the weak selection
  $\ell*\varkappa$, where each
  $\varkappa_\mathscr{Q}\in \sel_2(\mathscr{Q})$ is circular on
  $\mathscr{Q}$, $\mathscr{Q}\in \Omega$, and $\ell\in \sel_2(\Omega)$
  is generated by linear order $\leq$, i.e. $\leq_\ell=\leq$. Thus,
  $\sigma$ is $\Omega$-invariant and, in particular,
  $P<_\sigma \mathscr{Q}$ or $\mathscr{Q}<_\sigma P$ whenever
  $\mathscr{Q}\in \Omega$ and $P\in \mathscr{P}\setminus
  \mathscr{Q}$. Accordingly, by Proposition
  \ref{proposition-sel-top-32:1}, $\sigma$ is as required.\smallskip

  The remaining case is when the partition $\mathscr{P}$ is finite. In
  this case, take a maximal pairwise disjoint family $\Omega$ of
  triples of $\mathscr{P}$ and set
  $\mathscr{A}= \mathscr{P}\setminus \bigcup\Omega$. Evidently,
  $\Omega\neq \emptyset$ because $|\mathscr{P}|\geq 3$. Let $\leq$ be
  a linear order on $\Omega$, and $\eta$ be a weak selection for
  $\bigcup\Omega=\mathscr{P}\setminus \mathscr{A}$ such that each
  $\mathscr{Q}\in \Omega$ is $\eta$-circular and
  $\mathscr{Q}<_\eta \mathscr{R}$, whenever $\mathscr{Q}< \mathscr{R}$
  for $\mathscr{Q},\mathscr{R}\in \Omega$.  If
  $\mathscr{A}=\emptyset$, then $\sigma=\eta$ is a weak selection for
  $\mathscr{P}$ and, by Proposition \ref{proposition-sel-top-32:1}, it
  is as required. Suppose that $\mathscr{A}\neq \emptyset$, and take
  weak selections $h_P\in\sel_2(P)$, $P\in \mathscr{P}$.  We will
  construct the required weak selection
  $\sigma\in \sel_2(\mathscr{P})$ as an extension of $\eta$ on the
  elements of $\mathscr{A}$, i.e.\
  $\sigma\uhr \mathscr{F}_2(\mathscr{P}\setminus
  \mathscr{A})=\eta$. \smallskip

  \textbf{(A)} If $\mathscr{A}=\{A\}$ is a singleton, the extension
  $\sigma$ can be defined by $A<_\sigma P$, for every
  $P\in \mathscr{P}\setminus \mathscr{A}$. Evidently, the collection
  $\{\mathscr{A}\}\cup \Omega$ is $\sigma$-decisive and each triple
  $\mathscr{Q}\in \Omega$ is $\sigma$-circular. Hence, by Proposition
  \ref{proposition-sel-top-32:1},
  $\mathscr{Q}\subset \mathscr{T}_{\sigma*h}$ for every
  $\mathscr{Q}\in \Omega$. For the same reason,
  $A\in \mathscr{T}_{\sigma*h}$ because
  $(\gets, \mathscr{H})_{\leq_\sigma}=\{A\}=\mathscr{A}$, where
  $\mathscr{H}\in \Omega$ is the $\leq$-minimal element of $\Omega$,
  see \eqref{eq:sel-top-23:2}.
  \smallskip

  \textbf{(B)} If $\mathscr{A}$ has two elements $A$ and $B$, then the
  extension $\sigma$ can be defined by $A<_\sigma P<_\sigma B$ for
  $P\in \mathscr{P}\setminus \mathscr{A}$. Now the family
  $\{\{A\}\}\cup \Omega\cup \{\{B\}\}$ is $\sigma$-decisive and as in
  \textbf{(A)}, $A\in \mathscr{T}_{\sigma*h}$ and
  $\mathscr{Q}\subset \mathscr{T}_{\sigma*h}$,
  $\mathscr{Q}\in \Omega$. Similarly, since $\Omega$ also has a
  $\leq$-maximal element $\mathscr{R}\in \Omega$, by Proposition
  \ref{proposition-sel-top-32:1},
  $B=\bigcup(\mathscr{R},\to)_{\leq_\sigma}\in
  \mathscr{T}_{\sigma*h}$.
\end{proof}

We are now ready to finalise the proof of Theorem
\ref{theorem-sel-top-25:1}.

\begin{proof}[Proof of Theorem \ref{theorem-sel-top-25:1}]
  If $X$ is semi-orderable, then it is a topological sum of two
  orderable spaces $X_0$ and $X_1$. Take a compatible linear order
  $\leq_i$ on $X_i$, $i=0,1$, and let $\leq$ be the linear order on
  $X$ generated by $\leq_0$, $\leq_1$ and $X_0<X_1$. Then $\leq$ is a
  compatible linear order on $X$, called \emph{canonical} in
  \cite{gutev:07b}. Hence, it corresponds to a continuous weak
  selection $g$ for $X$ with $\leq_g=\leq$. Similarly, take another
  continuous weak selection $h$ for $X$ corresponding to the canonical
  order $\leq_h$ on $X$ determined by $\leq_0$, $\leq_1$ and
  $X_1<_h X_0$. Then $X_0,X_1\in \mathscr{T}_g\vee \mathscr{T}_h$ and,
  therefore, $\mathscr{T}_g\vee \mathscr{T}_h$ is the topology of
  $X$. Accordingly, $\cws(X)\leq 2$. Suppose that the semi-orderable
  space $X$ is not orderable and has at most two components. Then $X$
  has precisely two components, consequently these components are
  $X_0$ and $X_1$. Take any continuous weak selection $\sigma$ for
  $X$. Then by \cite[Lemma 7.2]{michael:51}, $\leq_\sigma$ is a linear
  order on $X_i$, $i=0,1$. Moreover, by \cite[Proposition
  2.6]{gutev:07a}, $X_0<_\sigma X_1$ or $X_1<_\sigma X_0$. This
  implies that $\leq_\sigma$ is a linear order on $X$. Since $X$ is
  not orderable, $\mathscr{T}_\sigma$ is not the topology of
  $X$. Thus, $\cws(X)=2$. \smallskip

  Suppose finally that $X$ is a semi-orderable space which has at
  least 3 different components $C_1,C_2,C_3\in \mathscr{C}[X]$. Since
  the components of $X$ coincide with the quasi-components, there are
  disjoint clopen sets $U_1,U_2\subset V$ such that $C_1\subset U_1$,
  $C_2\subset U_2$ and $C_3\subset U_3=X\setminus (U_1\cup
  U_2)$. According to \cite[Corollary 4.10]{gutev:07b}, each $U_i$,
  $i\leq 3$, is itself semi-orderable being open in $X$. This implies
  that $X$ has a clopen partition $\mathscr{Z}$ of orderable spaces
  with $|\mathscr{Z}|\geq 3$.  For each $Z\in \mathscr{Z}$, take a
  linear order $\leq_Z$ generating the topology of $Z$, and denote by
  $h_Z$ the (continuous) weak selection for $Z$ with
  $\leq_{h_Z}=\leq_Z$. Finally, let $\sigma$ be a weak selection for
  $\mathscr{Z}$ as in Lemma \ref{lemma-sel-top-4:1}. Then
  $\mathscr{T}_{\sigma* h}$ is the topology of $X$. Indeed, by Lemma
  \ref{lemma-sel-top-4:1},
  $\mathscr{Z}\subset \mathscr{T}_{\sigma* h}$, so each
  $Z\in \mathscr{Z}$ is open in $\mathscr{T}_{\sigma* h}$. Moreover,
  the restriction of $\sigma* h$ on each $Z\in \mathscr{Z}$ is
  identical to $h_Z$ and generates the topology of $Z$. The proof is
  complete.
\end{proof}

We conclude this section with a refinement of Lemma
\ref{lemma-sel-top-4:1} which will be found useful in resolving the
cws-number of metrizable suborderable spaces.

\begin{lemma}
  \label{lemma-sel-top-32:1}
  Let $\mathscr{P}$ be a partition of $X$ with $|\mathscr{P}|\geq 7$,
  and $S\in \mathscr{P}$ be a fixed element. Then $\mathscr{P}$ has a
  weak selection $\sigma$ such that $S$ is the $\leq_\sigma$-maximal
  element of $\mathscr{P}$ and
  $\mathscr{P}\subset \mathscr{T}_{\sigma* h}$, for every collection
  of weak selections $h_P\in\sel_2(P)$, $P\in \mathscr{P}$.
\end{lemma}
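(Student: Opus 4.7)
The plan is to reduce to Lemma \ref{lemma-sel-top-4:1} by reserving a designated $\sigma$-circular triple $\mathscr{Q}^*$ to sit immediately below $S$ in the selection order, so that Proposition \ref{proposition-sel-top-32:1} forces $S$ itself to be open as $\bigcup(\mathscr{Q}^*,\to)_{\leq_\sigma}$. Concretely, I would fix any triple $\mathscr{Q}^*=\{A,B,C\}\subset\mathscr{P}\setminus\{S\}$ and let $\mathscr{P}^*=\mathscr{P}\setminus(\mathscr{Q}^*\cup\{S\})$; since $|\mathscr{P}|\geq 7$, we have $|\mathscr{P}^*|\geq 3$, so Lemma \ref{lemma-sel-top-4:1} provides a weak selection $\sigma^*\in\sel_2(\mathscr{P}^*)$ with $\mathscr{P}^*\subset\mathscr{T}_{\sigma^**h^*}$ for every choice of $h^*_P\in\sel_2(P)$, $P\in\mathscr{P}^*$. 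I then extend $\sigma^*$ to $\sigma\in\sel_2(\mathscr{P})$ by declaring $A<_\sigma B<_\sigma C<_\sigma A$ on $\mathscr{Q}^*$ and $\mathscr{P}^*<_\sigma\mathscr{Q}^*<_\sigma S$. This makes $S$ the $\leq_\sigma$-maximal element of $\mathscr{P}$, while $\mathscr{Q}^*$ becomes a $\sigma$-circular triple with every element of $\mathscr{P}\setminus\mathscr{Q}^*$ either strictly $<_\sigma\mathscr{Q}^*$ or strictly $>_\sigma\mathscr{Q}^*$, as required by Proposition \ref{proposition-sel-top-32:1}.

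For arbitrary $h_P\in\sel_2(P)$, $P\in\mathscr{P}$, set $g=\sigma*h$ and $h^*=h\uhr\mathscr{P}^*$. I check $\mathscr{P}\subset\mathscr{T}_g$ in three cases. For $P\in\mathscr{Q}^*$, openness of $P$ in $\mathscr{T}_g$ is immediate from Proposition \ref{proposition-sel-top-32:1}. For $P=S$, the same proposition yields $\bigcup(\mathscr{Q}^*,\to)_{\leq_\sigma}\in\mathscr{T}_g$, and this union equals $S$ since $S$ is the unique element of $\mathscr{P}$ strictly above $\mathscr{Q}^*$. For $P\in\mathscr{P}^*$, Proposition \ref{proposition-sel-top-32:1} also tells us that $X^*=\bigcup(\gets,\mathscr{Q}^*)_{\leq_\sigma}=\bigcup\mathscr{P}^*$ is open in $\mathscr{T}_g$, so it suffices to verify that $P$ is open in the subspace $X^*$ of $(X,\mathscr{T}_g)$.

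The crux is therefore identifying the subspace topology on $X^*$ with $\mathscr{T}_{\sigma^**h^*}$. I would carry out a short case analysis on the traces of the subbasic $\leq_g$-intervals: for $x\in X^*$ both $(\gets,x)_{\leq_g}\cap X^*$ and $(x,\to)_{\leq_g}\cap X^*$ coincide with the corresponding $\leq_{\sigma^**h^*}$-intervals, because $\bigcup\mathscr{Q}^*\cup S$ lies entirely $\leq_\sigma$-above $X^*$; while for $x\in\bigcup\mathscr{Q}^*\cup S$ those traces collapse to $X^*$ or $\emptyset$. Hence the two topologies on $X^*$ share a subbase, so Lemma \ref{lemma-sel-top-4:1} gives $P\in\mathscr{T}_{\sigma^**h^*}$, which means $P$ is open in the subspace $X^*$ and therefore in $\mathscr{T}_g$ because $X^*$ itself is open. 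The main obstacle is exactly this subspace identification; once it is in place, everything else is a direct application of Proposition \ref{proposition-sel-top-32:1} to the single triple $\mathscr{Q}^*$.
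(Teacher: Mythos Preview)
Your argument is correct and takes a genuinely different route from the paper. The paper sets $\mathscr{P}_0=\mathscr{P}\setminus\{S\}$ and essentially reproves Lemma~\ref{lemma-sel-top-4:1} on $\mathscr{P}_0$, modifying the internal construction so that the \emph{last} block in the linear order on $\Omega$ is a circular triple $\mathscr{R}$; in the finite case with two leftover elements $A,B$ this forces a new placement of $B$ (between the minimal triple and the rest rather than at the top), and the hypothesis $|\mathscr{P}_0|\geq 6$ is what guarantees at least two triples so that $B$ can be squeezed between them. Your approach is more modular: you reserve one circular triple $\mathscr{Q}^*$ to sit between $\mathscr{P}^*$ and $S$, invoke Lemma~\ref{lemma-sel-top-4:1} as a black box on $\mathscr{P}^*$, and recover openness of each $P\in\mathscr{P}^*$ via the identification of the subspace topology on $X^*=\bigcup\mathscr{P}^*$ with $\mathscr{T}_{\sigma^**h^*}$. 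This makes the role of the bound $|\mathscr{P}|\geq 7$ transparent (it is exactly what forces $|\mathscr{P}^*|\geq 3$) and avoids reopening any case analysis; the cost is the subspace-topology step, which is routine once one observes that $g$ and $\sigma^**h^*$ agree on $X^*$ and that subbasic intervals centred at points of $\bigcup\mathscr{Q}^*\cup S$ trace to $X^*$ or $\emptyset$.
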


\begin{proof}
  Take weak selections $h_P\in\sel_2(P)$, $P\in \mathscr{P}$, and set
  $\mathscr{P}_0=\mathscr{P}\setminus\{S\}$. The proof consists of
  constructing a weak selection $\eta\in\sel_2(\mathscr{P}_0)$ and an
  $\eta$-circular triple $\mathscr{R}\subset \mathscr{P}_0$ such that
  $\mathscr{P}_0\subset \mathscr{T}_{\eta* h}$ and
  $P<_{\eta} \mathscr{R}$ for every
  $P\in \mathscr{P}_0\setminus \mathscr{R}$. Once this is done, one
  can extend $\eta$ to a weak selection $\sigma$ for the entire
  partition $\mathscr{P}$ by letting $P<_\sigma S$, for every
  $P\in \mathscr{P}_0$. Then it follows from Proposition
  \ref{proposition-sel-top-32:1} that
  $\mathscr{P}\subset \mathscr{T}_{\sigma*h}$. Indeed, $S\in
  \mathscr{T}_{\sigma*h}$ and 
  $\mathscr{R}\subset \mathscr{T}_{\sigma*h}$ because $\mathscr{R}$ is
  $\sigma$-circular and $P<_\sigma \mathscr{R}<_\sigma S$, for
  every $P\in \mathscr{P}_0\setminus \mathscr{R}$. For the same
  reason, $\bigcup (\mathscr{P}_0\setminus \mathscr{R})\in
  \mathscr{T}_{\sigma*h}$. Since $\mathscr{P}_0\subset
  \mathscr{T}_{\eta* h}$ and $\sigma$ is an extension of $\eta$, we
  also have that $\mathscr{P}_0\setminus \mathscr{R}\subset
  \mathscr{T}_{\sigma*h}$.\smallskip 

  The construction of the selection $\eta\in \sel_2(\mathscr{P}_0)$
  and the triple $\mathscr{R}\subset \mathscr{P}_0$ is very similar to
  that in Lemma \ref{lemma-sel-top-4:1}. Namely, if $\mathscr{P}_0$ is
  infinite and $\Omega$ is a partition of $\mathscr{P}_0$ consisting
  of triples of $\mathscr{P}_0$, then we can take a linear order
  $\leq$ on $\Omega$ with respect to which it has a maximal element
  $\mathscr{R}\in \Omega$. Next, we can define the required weak
  selection $\eta\in \sel_2(\mathscr{P}_0)$ precisely as in the proof
  of Lemma \ref{lemma-sel-top-4:1}. Suppose that $\mathscr{P}_0$ is
  finite, $\Omega$ is a maximal family of triples of $\mathscr{P}_0$
  and $\leq$ is a linear order on $\Omega$. The required triple
  $\mathscr{R}\in \Omega$ is now the $\leq$-maximal element of
  $\Omega$. As for the selection $\eta\in \sel_2(\mathscr{P}_0)$, its
  construction depends on the set
  $\mathscr{A}=\mathscr{P}_0\setminus \bigcup \Omega$. If
  $\mathscr{A}=\emptyset$ or $\mathscr{A}$ is a singleton, the
  construction is identical to that in Lemma \ref{lemma-sel-top-4:1}.
  The case when $\mathscr{A}$ has two elements $A,B\in \mathscr{A}$
  requires a slight modification in the construction of \textbf{(B)}
  in the proof of that lemma.  Briefly, define a weak selection
  $\eta_0\in \sel_2(\bigcup\Omega)$ such that each element of $\Omega$
  is $\eta_0$-circular, and $\mathscr{G}<_{\eta_0} \mathscr{H}$,
  whenever $\mathscr{G},\mathscr{H}\in \Omega$ with
  $\mathscr{G}<\mathscr{H}$. Take the $\leq$-minimal triple
  $\mathscr{Q}\in \Omega$, and extend $\eta_0$ to a weak selection
  $\eta\in\sel_2(\mathscr{P}_0)$ by
  $A<_\eta \mathscr{Q}<_\eta B<_\eta \mathscr{G}$ and
  $A<_\eta \mathscr{G}$, for every $\mathscr{G}\in \Omega$ with
  $\mathscr{G}\neq \mathscr{Q}$. Then by Proposition
  \ref{proposition-sel-top-32:1}, $A\in \mathscr{T}_{\eta*h}$ and
  $\mathscr{G}\subset \mathscr{T}_{\eta*h}$ for every
  $\mathscr{G}\in \Omega$, see \textbf{(A)} in the proof of Lemma
  \ref{lemma-sel-top-4:1}. To show finally that
  $B\in \mathscr{T}_{\eta*h}$, observe that the $\leq$-minimal element
  $\mathscr{Q}\in \Omega$ is not equal to the $\leq$-maximal element
  of $\Omega$ because $|\mathscr{P}_0|\geq 6$.  Let
  $\mathscr{H}\in \Omega$ be the $\leq$-minimal element of $\Omega$
  with $B<_\eta \mathscr{H}$. Then
  $\mathscr{Q}<_\eta B<_\eta \mathscr{H}$ and, in fact,
  $\{B\}=(\mathscr{Q},\to)_{\leq_\eta}\cap
  (\gets,\mathscr{H})_{\leq_\eta}$.  Hence, by Proposition
  \ref{proposition-sel-top-32:1}, $B\in \mathscr{T}_{\eta*h}$.
\end{proof}

The following is an immediate consequence of Lemma \ref{lemma-sel-top-32:1}. 

\begin{corollary}
  \label{corollary-sel-top-14:1}
  Let $\mathscr{P}$ be a partition of $X$, $S\in \mathscr{P}$ and
  $\mathscr{P}\setminus \{S\}=\mathscr{Q}\cup \mathscr{R}$ for some
  disjoint subsets $\mathscr{Q},\mathscr{R}\subset \mathscr{P}$.  If
  $|\mathscr{Q}|\geq 6$ and $|\mathscr{R}|\geq 6$, then $\mathscr{P}$
  has a weak selection $\sigma$ such that
  $\mathscr{Q}\cup\{S\}\leq_\sigma \{S\}\cup \mathscr{R}$ and
  $\mathscr{P}\subset \mathscr{T}_{\sigma* h}$, for every collection
  of weak selections $h_P\in\sel_2(P)$, $P\in \mathscr{P}$.
\end{corollary}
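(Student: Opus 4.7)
The plan is to combine two weak selections given by Lemma \ref{lemma-sel-top-32:1}: one on $\mathscr{P}_L=\mathscr{Q}\cup\{S\}$ with $S$ at the top, one on $\mathscr{P}_R=\{S\}\cup\mathscr{R}$ with $S$ at the bottom. Since $|\mathscr{Q}|,|\mathscr{R}|\geq 6$, both collections have at least $7$ elements, so Lemma \ref{lemma-sel-top-32:1} produces $\sigma_L\in\sel_2(\mathscr{P}_L)$ with $S$ as $\leq_{\sigma_L}$-maximal and $\mathscr{P}_L\subset\mathscr{T}_{\sigma_L* h}$ for every $h$. To obtain the dual selection $\sigma_R\in\sel_2(\mathscr{P}_R)$ with $S$ as $\leq_{\sigma_R}$-minimal, I apply the lemma to $\mathscr{P}_R$ and reverse: reversing both the weak selection on $\mathscr{P}_R$ and each $h_P$ simultaneously preserves the selection topology, while the universal quantifier over $h$ in the lemma absorbs the latter reversals. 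Next, define $\sigma\in\sel_2(\mathscr{P})$ by setting $\sigma|_{\mathscr{F}_2(\mathscr{P}_L)}=\sigma_L$, $\sigma|_{\mathscr{F}_2(\mathscr{P}_R)}=\sigma_R$ and $Q<_\sigma R$ for every $Q\in\mathscr{Q}$, $R\in\mathscr{R}$; the two restrictions agree on the singleton $\{S\}$, and the construction yields $\mathscr{Q}<_\sigma S<_\sigma\mathscr{R}$, which gives $\mathscr{Q}\cup\{S\}\leq_\sigma\{S\}\cup\mathscr{R}$.

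It remains to verify $\mathscr{P}\subset\mathscr{T}_g$ for $g=\sigma* h$. The crucial observation is that because every element of $\mathscr{R}$ is $\sigma$-above every element of $\mathscr{P}_L$, one has $(\gets,x)_{\leq_g}=(\gets,x)_{\leq_{g_L}}$ and $(x,\to)_{\leq_g}=(x,\to)_{\leq_{g_L}}\cup\bigcup\mathscr{R}$ for every $x\in\bigcup\mathscr{P}_L$ (with symmetric identities for $x\in\bigcup\mathscr{P}_R$), where $g_L=\sigma_L* h$ and $g_R=\sigma_R* h$. Consequently, a basic open set $B=\bigcap_iW_i$ of $\mathscr{T}_{g_L}$ that uses at least one left ray $W_i=(\gets,a_i)_{\leq_{g_L}}$ coincides with the corresponding intersection of $g$-rays, because that left ray forces the whole intersection into $\bigcup\mathscr{P}_L$ and thus erases the $\bigcup\mathscr{R}$-pieces contributed by the right rays. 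For $P\in\mathscr{Q}$ and $p\in P$, I therefore take a basic $\mathscr{T}_{g_L}$-neighborhood $B\ni p$ inside $P$ and intersect with $(\gets,s)_{\leq_{g_L}}$ for any $s\in S$---which is a left ray containing $p$ since $P<_{\sigma_L}S$---producing an open $\mathscr{T}_g$-neighborhood of $p$ inside $P$. The case $P\in\mathscr{R}$ is handled symmetrically, via $\sigma_R$ and a right ray based at some point of $S$.

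The main obstacle is $P=S$, because the ``add a ray from above'' trick fails at points $p\in S$ that are $g_L$-maximal in $\bigcup\mathscr{P}_L$, most starkly when $|S|=1$. The fix uses that $S$ is open in \emph{both} $\mathscr{T}_{g_L}$ and $\mathscr{T}_{g_R}$: for each $p\in S$ choose basic neighborhoods $B_L\ni p$ in $\mathscr{T}_{g_L}$ and $B_R\ni p$ in $\mathscr{T}_{g_R}$ with $B_L,B_R\subset S$. Interpreted as intersections of $g$-rays, $B_L$ grows at worst into $B_L\cup\bigcup\mathscr{R}$ and $B_R$ into $B_R\cup\bigcup\mathscr{Q}$, so their intersection lies in $(B_L\cap B_R)\cup(\bigcup\mathscr{R}\cap\bigcup\mathscr{Q})=B_L\cap B_R\subset S$ while still containing $p$. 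Being a finite intersection of open $g$-rays, this set is open in $\mathscr{T}_g$, providing the required neighborhood and showing $S\in\mathscr{T}_g$.
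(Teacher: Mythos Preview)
Your proof is correct and follows the approach the paper intends: the paper itself gives no argument beyond the sentence ``The following is an immediate consequence of Lemma~\ref{lemma-sel-top-32:1},'' and your construction---applying Lemma~\ref{lemma-sel-top-32:1} to $\mathscr{Q}\cup\{S\}$, applying its reversal to $\{S\}\cup\mathscr{R}$, and gluing---is precisely what makes that sentence true. Your careful verification that the gluing preserves openness in $\mathscr{T}_{\sigma*h}$ (in particular the two-sided argument for $S$, which is the only genuinely delicate point) supplies the details the paper omits.
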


\section{Suborderable Spaces and Components}

Let $\mathscr{P}$ be a partition $X$. In this section, it will make
sense to look at $\mathscr{P}$ as a map from $X$ to the subsets of $X$
assigning to each $x\in X$ the unique element
$\mathscr{P}[x]\in \mathscr{P}$ with $x\in \mathscr{P}[x]$. In this
interpretation, $\mathscr{P}[X]=\{\mathscr{P}[x]:x\in X\}$ is the
partition $\mathscr{P}$. As a topological space, we will consider
$\mathscr{P}=\mathscr{P}[X]$ endowed with quotient topology generated
by $\mathscr{P}$ as an equivalence relation on $X$, so
$\mathscr{P}:X\to \mathscr{P}[X]$ is the corresponding quotient
map. Then a subset $\mathscr{U}\subset \mathscr{P}[X]$ is open if and
only if $\mathscr{P}^{-1}(\mathscr{U})=\bigcup\mathscr{U}$ is open in
$X$. Two natural closed partitions on a space $X$ are the components
$\mathscr{C}[X]$ and the quasi-components $\mathscr{Q}[X]$. As
mentioned in the previous section, $\mathscr{C}[X]=\mathscr{Q}[X]$
provided $X$ has a continuous weak selection. \medskip

A space $Z$ is \emph{zero-dimensional} if it has a base of clopen
sets, and is \emph{strongly zero-dimensional} if $\dim(Z)=0$, where
$\dim(Z)$ is the \emph{covering dimension}. In the realm of normal
spaces, $\dim(Z)=0$ if and only if the \emph{large inductive
  dimension} of $Z$ is 0, i.e.\ if every two disjoint closed subsets
of $Z$ are contained in disjoint clopen subsets. The following theorem
will play a crucial role in this paper.

\begin{theorem}
  \label{theorem-sel-top-9:1}
  If $X$ is a suborderable space, then $\mathscr{C}[X]$ is a strongly
  zero-dimen\-sional suborderable space and $\mathscr{C}:X\to
  \mathscr{C}[X]$ is closed. Moreover, $\mathscr{C}[X]$ is metrizable
  provided so is $X$.
\end{theorem}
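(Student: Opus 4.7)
The plan is to transfer the order structure of $X$ to the quotient $\mathscr{C}[X]$ and derive each conclusion from the resulting ordered structure. First, fix a compatible linear order $\leq$ on $X$. Since every connected subset of a suborderable space is order-convex, each component $\mathscr{C}[x]$ is convex, and therefore for distinct $C,D\in \mathscr{C}[X]$ exactly one of $C<D$ or $D<C$ holds. This defines a linear order $\preceq$ on $\mathscr{C}[X]$. A routine verification using convexity of components and the suborderable structure of $X$ shows that the quotient topology on $\mathscr{C}[X]$ is suborderable with respect to $\preceq$: the $\preceq$-open rays pull back to open $\leq$-rays of $X$, and every quotient-open neighborhood of a class admits a $\preceq$-convex refinement obtained from a $\leq$-convex open neighborhood in $X$.

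For strong zero-dimensionality, the crucial observation is that every clopen subset of $X$ is saturated with respect to $\mathscr{C}$: a clopen meeting a connected set must contain it. Since $X$ admits a continuous weak selection generated by $\leq$, the cited result gives $\mathscr{C}[z]=\mathscr{Q}[z]$, so any two distinct components of $X$ are separated by a clopen subset of $X$. Pushing this clopen through the quotient produces a clopen in $\mathscr{C}[X]$ separating the corresponding pair of points, so $\mathscr{C}[X]$ is totally disconnected. In any suborderable space, total disconnectedness is equivalent to zero-dimensionality and to strong zero-dimensionality, by a classical argument that uses the order to manufacture clopen $\preceq$-convex sets around each point.

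For closedness of $\mathscr{C}$, it suffices to show that the saturation $\widetilde F=\mathscr{C}^{-1}(\mathscr{C}(F))$ of any closed $F\subset X$ is closed. Suppose for contradiction that $y\in \overline{\widetilde F}\setminus \widetilde F$; then $\mathscr{C}[y]\cap F=\emptyset$, and one can find a net $y_\alpha\to y$ with $y_\alpha\in \mathscr{C}[x_\alpha]$ for some $x_\alpha\in F$. Since each $\mathscr{C}[y_\alpha]$ is convex and disjoint from $\mathscr{C}[y]$, after passing to a subnet one may assume $y<y_\alpha$ for all $\alpha$. The main point is that the $x_\alpha$ may be chosen so that $x_\alpha\to y$: given a basic neighborhood $(a,b)$ of $y$, either eventually $\mathscr{C}[y_\alpha]\subset (y,b)$, so any $x_\alpha\in F\cap \mathscr{C}[y_\alpha]$ automatically lies in $(a,b)$; or cofinally many $\mathscr{C}[y_\alpha]$ contain $b$, but then all such components coincide with the closed set $\mathscr{C}[b]$, forcing $y\in \mathscr{C}[b]=\mathscr{C}[y]$ and so $\mathscr{C}[y]\cap F\neq \emptyset$, a contradiction. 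Hence $x_\alpha\to y$, and the closedness of $F$ yields $y\in F\subset \widetilde F$, again a contradiction.

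For the metrizability clause, apply the Hanai--Morita--Stone theorem: a Hausdorff space that is the closed continuous image of a metrizable space is metrizable precisely when each fiber has compact boundary. The fibers of $\mathscr{C}$ are closed convex subsets of the suborderable metrizable $X$, and the boundary of such a set contains at most two points---namely its $\leq$-minimum and $\leq$-maximum, when these exist and are accumulation points from outside the set---hence is automatically compact. Metrizability of $\mathscr{C}[X]$ then follows. The main obstacle in the whole argument is the closedness step, where the order structure must be exploited delicately to extract a convergent subnet of points of $F$, through the dichotomy between components $\mathscr{C}[y_\alpha]$ that fit inside a prescribed right-neighborhood of $y$ and those that all coincide with a single large component $\mathscr{C}[b]$.
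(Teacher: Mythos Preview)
Your argument is correct. For suborderability and strong zero-dimensionality you follow the paper's path (Corollary~\ref{corollary-sel-top-18:1} and the Herrlich--Purisch fact), though your ``routine verification'' of the convex base on $\mathscr{C}[X]$ is exactly the content of Proposition~\ref{proposition-sel-top-18:2} and does require a short case analysis. The closedness and metrizability steps, however, take genuinely different routes. For closedness the paper does not use nets: Proposition~\ref{proposition-sel-top-18:2} directly builds, for each component $C$ and each open $U\supset C$, a \emph{saturated} convex open $O$ with $C\subset O\subset U$; closedness of $\mathscr{C}$ drops out, and more importantly this saturated convex base is reused later (Corollaries~\ref{corollary-sel-top-10:2} and~\ref{corollary-sel-top-14:3}). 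Your net argument is valid---modulo the harmless slip that basic convex neighborhoods in a suborderable space need not have the form $(a,b)$; one simply chooses any $b\in V$ with $y<b$ and runs the same dichotomy---but it yields only closedness, not the stronger neighborhood statement. For metrizability the paper avoids Hanai--Morita--Stone: it selects a Purisch set $Z\subset X$ (closed, meeting every component in at most two points, Proposition~\ref{proposition-sel-top-10:1}) and observes that $\mathscr{C}\uhr Z:Z\to\mathscr{C}[X]$ is a perfect surjection, whence metrizability transfers. Your boundary-of-fiber argument is equally clean and more self-contained; the paper's choice reflects that Purisch sets are an established device in this circle of ideas.
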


The proof of Theorem \ref{theorem-sel-top-9:1} is based on the
following considerations.\medskip

A subset $S\subset X$ of an ordered set $(X,\leq)$ is called
\emph{convex} (also, an \emph{interval}) if
$[x,y]_{\leq}=\{z\in X: x\leq z\leq y\}\subset S$ for every $x,y\in S$
with $x\leq y$.  E.~\v{C}ech \cite{cech:66} showed that a space $X$ is
\emph{suborderable} (a subspaces of an orderable space) if and only if
it admits a linear ordering such that the corresponding open interval
topology is coarser than the topology of $X$, and $X$ has a base of
convex sets with respect to this order. In the sequel, every such
linear order $\leq$ on $X$ will be called \emph{compatible}, and we
will often say that $X$ is suborderable with respect to $\leq$ or,
simply, that $(X,\leq)$ is suborderable.\medskip

For a suborderable space $(X,\leq)$ and a (nonempty) subset
$A\subset X$, let $(\gets,A)_{\leq}$ and $(A,\to)_{\leq}$ be defined
as in \eqref{eq:sel-top-23:2}. Evidently, $(\gets,A)_{\leq}$ and
$(A,\to)_\leq$ are convex but not necessarily open. Here, we will also
need the following ``$\leq$-closed'' intervals associated to $A$:
\begin{equation}
  \label{eq:sel-top-34:1}
  (\gets,A]_{\leq}=X\setminus (A,\to)_\leq\quad \text{and}\quad
  [A,\to)_\leq=X\setminus (\gets,A)_\leq.
\end{equation}
Evidently, $(\gets,A]_{\leq}$ and $[A,\to)_\leq$ are also convex and
contain the set $A$. In fact, we need these intervals in the special
case when $A=C\in \mathscr{C}[X]$ is a component of $X$.  In this
case, since $C$ is connected, it follows that $x< C$ or $C< x$, for
every $x\in X\setminus C$. Hence, both $(\gets,C)_\leq$ and
$(C,\to)_\leq$ are open in $X$ because
$(\gets, C)_\leq=(\leftarrow,y)_{\leq}\setminus C$ and
$(C,\to)_\leq=(y,\to)_{\leq}\setminus C$, whenever $y\in C$. In
particular, $(\gets,C]_{\leq}$ and $[C,\to)_{\leq}$ are closed sets
with $(\gets,C]_{\leq}\cap [C,\to)_{\leq}=C$.

\begin{proposition}
  \label{proposition-sel-top-18:2}
  If $(X,\leq)$ is a suborderable space, $C$ is a component of $X$ and
  $U$ is a neighbourhood of $C$, then there exists a convex open set
  $O\subset X$ such that $C\subset O\subset U$ and
  $O=\mathscr{C}^{-1}(\mathscr{C}(O))$. In particular,
  $\mathscr{C}:X\to \mathscr{C}[X]$ is a closed map.
\end{proposition}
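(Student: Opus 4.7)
The plan is to first construct a convex open neighbourhood $O_0$ of $C$ inside $U$ from the convex base of $X$, then trim at most one ``split'' component from each side of $C$ to obtain a saturated set $O$.

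For each $x \in C$, using that $(X,\leq)$ has a base of convex open sets, choose a convex open $V_x$ with $x \in V_x \subset U$, and set $O_0 := \bigcup_{x \in C} V_x$. Then $O_0$ is open with $C \subset O_0 \subset U$, and it is convex by a short case analysis: for $u \in V_x$, $v \in V_y$ with $u \leq w \leq v$ (WLOG $x \leq y$ in $C$), $w$ lies in $V_x$ if $w \leq x$, in the convex set $C$ if $x \leq w \leq y$, and in $V_y$ if $w \geq y$.

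Next I examine how $O_0$ meets other components. Components of $X$ are convex (an order-nonconvex connected set would admit a disconnection by the order-open intervals $(\gets, z)_\leq$ and $(z, \to)_\leq$), hence any two distinct components form a linearly ordered pair of disjoint convex sets. Since $O_0$ is convex and contains $C$, the set $L := O_0 \cap (\gets, C)_\leq$ is \emph{upward-closed} in $(\gets, C)_\leq$: if $y \in L$ and $y \leq z < C$, picking any $c \in C \subset O_0$ yields $y \leq z \leq c$ with $y, c \in O_0$, so $z \in O_0$ and hence $z \in L$. Thus each component $C' < C$ is wholly in $L$, wholly disjoint from $L$, or \emph{split} (intersecting both $L$ and its complement in $(\gets, C)_\leq$). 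At most one such $C'$ can be split on the left: if $C'_1 < C'_2$ were both split, $y_1 \in C'_1 \cap L$ and $y_2 \in C'_2 \setminus L$ would satisfy $y_1 \leq y_2 < C$, and upward-closedness would force $y_2 \in L$, contradicting $y_2 \notin L$. The same holds on the right. Denote the (possibly empty) split components by $C'_\ell$ and $C'_r$ and define
\[
  O := O_0 \setminus (C'_\ell \cup C'_r).
\]

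To verify that $O$ works: it is open since components are closed in $X$; it is saturated because every component meeting $O$ is either $C$ or a non-split component wholly contained in $O_0$, hence wholly in $O$; and it is convex because $C'_\ell \cap O_0$ forms the lowest initial segment of $L$ (every $X$-point of $L$ strictly below $\sup C'_\ell$ lies in $C'_\ell$ by the upward-closedness of $L$ together with the convexity of $C'_\ell$), so $L \setminus C'_\ell$ is still an upward-closed end-segment of $(\gets, C)_\leq$, and symmetrically on the right. The containment $C \subset O \subset U$ is immediate, and the ``In particular'' clause follows from the standard criterion that a continuous surjection is a closed map iff every open neighbourhood of a fibre admits a saturated open neighbourhood sandwiched inside it. The main technical obstacle is the at-most-one-split-per-side property, together with a careful handling of the boundary point $\alpha = \inf L$ (which may or may not lie in $X$); both rest on the convexity of components in suborderable spaces combined with the upward-closedness of $L$.
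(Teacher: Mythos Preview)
Your proof is correct and takes a genuinely different route from the paper. The paper constructs $O$ directly as an intersection $L\cap R$ of two half-lines, each of which is automatically open, convex, and saturated: one takes $L=(\gets,C]_\leq$ if this set happens to be open, and otherwise observes that $C$ must have a last element $p$ that is a limit of $(C,\to)_\leq$, so some $q>p$ satisfies $[p,q]_\leq\subset U$, and then $L=(\gets,\mathscr{C}[q])_\leq$ works; $R$ is handled symmetrically. Your argument instead first builds a convex open $O_0$ from the convex base and then \emph{repairs} saturation by deleting at most one split component from each side. The paper's approach is shorter and leans on a clean order-theoretic dichotomy; yours is more hands-on but arguably more robust, since it never needs to ask whether $(\gets,C]_\leq$ is open, and the ``at most one split component per side'' observation is a pleasant structural fact in its own right. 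One minor comment: your closing remark about the boundary point $\alpha=\inf L$ is a red herring---the argument you actually gave (that every point of $L\setminus C'_\ell$ lies strictly above $C'_\ell$, so $O_0\setminus C'_\ell = O_0\cap(C'_\ell,\to)_\leq$ is convex) works purely from upward-closedness of $L$ and convexity of components, without ever forming an infimum.
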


\begin{proof}
  The set $O$ can be defined as the intersection $L\cap R$ of two open
  convex sets $L,R\subset X$ which are unions of components of
  $X$. For instance, take $L=(\gets,C]_\leq$ if this set is
  open. Otherwise, if $(\gets,C]_\leq$ is not open, the interval
  $(C,\to)_\leq$ is not closed and there exists a point
  $p\in (\gets,C]_\leq\cap \overline{(C,\to)_\leq}$. Evidently,
  $p\in C\subset U$ is the last element of $C$, which is a
  non-isolated point of $X\setminus C$. Hence, there exists $q\in X$
  such that $p< q$ and $[p,q]_\leq \subset U$. In this case, the
  component $S=\mathscr{C}[q]$ is different from $C$, in fact $C<
  S$. Now, we can take $L=(\gets, S)_\leq$. Since the construction of
  $R$ is completely analogous, the proof is complete.
\end{proof}

Given any two different components $C,S\subset X$, we have that either
$C< S$ or $S< C$. In particular, $\leq$ is a linear order on the
partition $\mathscr{C}[X]$ and, in fact, the quotient space
$\mathscr{C}[X]$ is suborderable with respect to $\leq$.

\begin{corollary}
  \label{corollary-sel-top-18:1}
  If $(X,\leq)$ is a suborderable space, then the quotient space
  $\mathscr{C}[X]$ is also suborderable with respect to $\leq$.
\end{corollary}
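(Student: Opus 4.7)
The plan is to verify the two conditions of \v{C}ech's characterisation of suborderability for the quotient space $\mathscr{C}[X]$ with respect to the linear order $\leq$. The fact that $\leq$ restricts to a linear order on the set of components is already established in the paragraph preceding the corollary, so only the topological conditions remain.

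First, I would check that the open interval topology on $\mathscr{C}[X]$ induced by $\leq$ is coarser than the quotient topology. Since the quotient topology is generated by declaring $\mathscr{U}\subset\mathscr{C}[X]$ open precisely when $\bigcup\mathscr{U}$ is open in $X$, it suffices to show that for every $C\in\mathscr{C}[X]$, the subbasic intervals $(\gets,C)_\leq$ and $(C,\to)_\leq$ in $\mathscr{C}[X]$ are quotient-open. Their respective unions in $X$ are precisely the $\leq$-intervals $(\gets,C)_\leq$ and $(C,\to)_\leq$ in $X$ (since each component is either entirely to the left of $C$ or entirely to its right), and these are open in $X$ by the observation made immediately before Proposition \ref{proposition-sel-top-18:2}.

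Next, I would produce a local base of $\leq$-convex open sets at each point of $\mathscr{C}[X]$. Given $C\in\mathscr{C}[X]$ and an open $\mathscr{V}\subset\mathscr{C}[X]$ with $C\in\mathscr{V}$, the set $U=\bigcup\mathscr{V}$ is an open neighbourhood of $C$ in $X$. Proposition \ref{proposition-sel-top-18:2} supplies a $\leq$-convex open set $O\subset X$ with $C\subset O\subset U$ and $O=\mathscr{C}^{-1}(\mathscr{C}(O))$. Since $O$ is saturated, its image $\mathscr{C}(O)$ is open in $\mathscr{C}[X]$, and clearly $C\in\mathscr{C}(O)\subset\mathscr{V}$.

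The only genuine verification, which I expect to be the single non-trivial point, is that $\mathscr{C}(O)$ is $\leq$-convex in $\mathscr{C}[X]$. Suppose $S_1,S_2\in\mathscr{C}(O)$ with $S_1<S_2$, and let $T\in\mathscr{C}[X]$ satisfy $S_1<T<S_2$. Because each component is itself $\leq$-convex (any $x\notin C$ satisfies $x<C$ or $C<x$, by connectedness), every $y\in T$ lies $\leq$-between every point of $S_1$ and every point of $S_2$. Choosing $x_1\in S_1\cap O$ and $x_2\in S_2\cap O$ and invoking the $\leq$-convexity of $O$ yields $y\in O$, so $T\subset O$ and therefore $T\in\mathscr{C}(O)$. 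Combining this with the first step finishes the proof, as the two conditions of \v{C}ech's criterion are then both in hand.
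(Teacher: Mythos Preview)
Your proof is correct and follows essentially the same route as the paper: both verify \v{C}ech's two conditions by showing the $\leq$-intervals in $\mathscr{C}[X]$ pull back to the open sets $(\gets,C)_\leq$, $(C,\to)_\leq$ in $X$, and then invoke Proposition~\ref{proposition-sel-top-18:2} to produce a base of convex open sets via saturated convex $O\subset X$. The only difference is that you spell out the verification that $\mathscr{C}(O)$ is $\leq$-convex, which the paper leaves implicit.
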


\begin{proof}
  If $C$ is a component of $X$ and
  $\mathscr{L}_C=\{S\in \mathscr{C}[X]: S< C\}$, then
  $\mathscr{C}^{-1}(\mathscr{L}_C)$ is open in $X$ because
  ${\mathscr{C}^{-1}(\mathscr{L}_C)=(\gets,C)_{\leq}}$. Thus,
  $\mathscr{L}_C$ is open in $\mathscr{C}[X]$. Similarly, the set
  $\mathscr{R}_C=\{S\in \mathscr{C}[X]:C< S\}$ is also open in
  $\mathscr{C}[X]$. Hence, the topology of $\mathscr{C}[X]$ is finer
  than the open interval one generated by $\leq$. In fact,
  $\mathscr{C}[X]$ is suborderable with respect to $\leq$ because it
  has a base of convex sets with respect to $\leq$. Namely, if
  $U\subset X$ is an open set containing $C$, then by Proposition
  \ref{proposition-sel-top-18:2}, there exists a convex open set
  $O\subset X$ such that $C\subset O\subset U$ and
  $O=\mathscr{C}^{-1}(\mathscr{C}(O))$. The proof is complete.
\end{proof}

A space $Z$ is \emph{totally disconnected} if each point of $Z$ is an
intersection of clopen sets or, equivalently, if
$\mathscr{Q}[z]=\{z\}$ for every $z\in Z$.  It was shown in
\cite[Lemma 1]{herrlich:65} that every orderable totally disconnected
space is strongly zero-dimensional. Subsequently, it was remarked by
Purisch \cite[Proposition 2.3]{purisch:77} that the same argument
works to show that every totally disconnected suborderable space is
strongly zero-dimensional. For a suborderable space $X$, the
components of the quotient space $\mathscr{C}[X]$ are
singletons. According to Corollary \ref{corollary-sel-top-18:1},
$\mathscr{C}[X]$ is suborderable as well. Hence, as mentioned above,
it is also strongly zero-dimensional. Thus, we have also the following
consequence, the second part of which follows from Proposition
\ref{proposition-sel-top-18:2}.

\begin{corollary}
  \label{corollary-sel-top-10:2}
  If $X$ is a suborderable space, then $\mathscr{C}[X]$ is strongly
  zero-dimen\-sional. In particular, if $C\in \mathscr{C}[X]$ and
  $U\subset X$ is an open set containing $C$, then there exists a
  clopen set $V\subset X$ with $C\subset V\subset U$.
\end{corollary}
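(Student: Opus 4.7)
The plan is to deduce the corollary by combining three already-established ingredients: Corollary \ref{corollary-sel-top-18:1} (the quotient $\mathscr{C}[X]$ is suborderable under $\leq$), Proposition \ref{proposition-sel-top-18:2} (saturated convex open neighbourhoods of components exist and $\mathscr{C}$ is closed), and the cited Purisch theorem that every totally disconnected suborderable space is strongly zero-dimensional. All of the pieces are already present in the paragraph preceding the statement; the proposal is essentially to spell out how they fit together.

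For the first assertion, I would first verify that $\mathscr{C}[X]$ is totally disconnected in the paper's sense. The points of $\mathscr{C}[X]$ are by construction the components of $X$, and given two distinct points $p=\mathscr{C}[x]$, $q=\mathscr{C}[y]$ with corresponding components $C_p<C_q$ under the compatible order, Proposition \ref{proposition-sel-top-18:2} applied to $C_p$ and the open set $(\gets,C_q)_\leq$ produces a saturated convex open $O$ with $C_p\subset O$ and $O\cap C_q=\emptyset$. Inspecting the construction in that proposition's proof (where $O=L\cap R$ is cut along components strictly outside $C_p$, using the closed intervals $(\gets,C]_\leq$ and $[C,\to)_\leq$), one sees that $O$ is in fact clopen in $X$, so $\mathscr{C}(O)$ is a clopen subset of the quotient separating $p$ from $q$. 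This forces every quasi-component of $\mathscr{C}[X]$ to be a singleton, and together with Corollary \ref{corollary-sel-top-18:1}, Purisch's result \cite[Proposition~2.3]{purisch:77} delivers strong zero-dimensionality.

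For the ``in particular'' part, let $C\in\mathscr{C}[X]$ and let $U\subset X$ be open with $C\subset U$. Proposition \ref{proposition-sel-top-18:2} supplies a convex open $O$ with $C\subset O\subset U$ and $O=\mathscr{C}^{-1}(\mathscr{C}(O))$, so $\mathscr{C}(O)$ is open in the quotient. Strong zero-dimensionality combined with hereditary normality of the suborderable space $\mathscr{C}[X]$ yields zero-dimensionality, so a clopen $\mathscr{W}\subset\mathscr{C}[X]$ with $C\in\mathscr{W}\subset\mathscr{C}(O)$ exists; then $V=\mathscr{C}^{-1}(\mathscr{W})$ is clopen in $X$ by continuity and closedness of $\mathscr{C}$, and satisfies $C\subset V\subset O\subset U$. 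The only minor care needed is in the separation step of the first part, where one must confirm that the cut supplied by Proposition \ref{proposition-sel-top-18:2} is genuinely clopen and not merely open; but since that proposition's construction cuts precisely at gaps between components, this is routine bookkeeping and presents no substantive obstacle.
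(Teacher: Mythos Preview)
Your argument for the ``in particular'' clause is fine and matches the paper's intent, but the first part contains a genuine gap. You claim that the set $O=L\cap R$ produced in the proof of Proposition~\ref{proposition-sel-top-18:2} is clopen in $X$, dismissing the verification as ``routine bookkeeping.'' This is false. When $(\gets,C_p]_\leq$ is not open, that proof takes $L=(\gets,S)_\leq$ for some component $S$ with $C_p<S$; while $(\gets,S)_\leq$ is always open, it is closed only when $\ell(S)=0$, and nothing in the construction forces this. Concretely, take
\[
X=[0,1]\cup\{1+\tfrac1n:n\ge 2\}\cup\{3-\tfrac1n:n\ge 2\}\cup[3,4]\cup[5,6]\subset\R,
\]
with $C_p=[0,1]$, $C_q=[5,6]$, and $U=(\gets,C_q)_\leq$. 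Here $(\gets,C_p]_\leq$ is not open, and the choice $q=3$ is permitted by the construction; then $S=[3,4]$ and $L=(\gets,S)_\leq$ omits $3$ but has $3$ in its closure. The resulting $O=L$ is not clopen, and $\mathscr{C}(O)$ is not clopen in the quotient either, since its complement $\{[3,4],[5,6]\}$ has preimage $[3,4]\cup[5,6]$, which is not open in $X$. So your separation step fails as stated, and fixing it would require genuinely modifying the construction of Proposition~\ref{proposition-sel-top-18:2}, not merely inspecting it.

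The paper's route is both shorter and sidesteps this issue: it simply observes that the components of $\mathscr{C}[X]$ are singletons---a general fact about the quotient of any space by its connected components, requiring no clopen separation in $X$. Since $\mathscr{C}[X]$ is suborderable by Corollary~\ref{corollary-sel-top-18:1}, its components coincide with its quasi-components, so it is totally disconnected, and Purisch's result applies directly.
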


To conclude the preparation for the proof of Theorem
\ref{theorem-sel-top-9:1}, let us remark that a possible way to model
the quotient space $\mathscr{C}[X]$ for a suborderable space $X$ is by
a suitable subset $Z\subset X$. For instance, for every
$C\in \mathscr{C}[X]$ one can take a point $x_C\in C$ and consider
$Z=\{x_C:C\in \mathscr{C}[X]\}$. The set $Z$ looks similar to
$\mathscr{C}[X]$ utilising the idea to identify each component
$C\in \mathscr{C}[X]$ into a single point.  However, such a simplistic
approach may not lead even to a closed subspace $Z\subset X$. Another
interesting approach was offered by Purisch \cite{purisch:77}. To this
end, recall that a point $p$ of a connected space $C$ is called
\emph{cut} if $C\setminus\{p\}$ is not connected; and it is
\emph{noncut} if $C\setminus\{p\}$ is connected. In \cite{purisch:77},
Purisch considered spaces $X$ each of whose components has at most two
noncut points; clearly, suborderable spaces have this property. Then
he defined a subset $Z\subset X$ as follows.  If a component
$C\in \mathscr{C}[X]$ is a singleton or open, then one point of $C$
belongs to $Z$; if $C\in \mathscr{C}[X]$ is a non-degenerate non-open
component of $X$, then two points of $C$, including all noncut points,
belong to $Z$. Such sets and some slight modifications of them were
called \emph{Purisch sets} in \cite{gutev:07a,gutev-nogura:09a}. One
of the best properties of Purisch sets is that they are closed in $X$,
moreover every two such sets are homeomorphic. Herewith, we are mainly
interested in the fact that every suborderable space $X$ contains a
closed subset $Z\subset X$ with $1\leq |Z\cap C|\leq 2$, for every
$C\in \mathscr{C}$. Indeed, every Purisch set has this
property. Hence, we have the following observation.

\begin{proposition}
  \label{proposition-sel-top-10:1}
  Every suborderable space $X$ contains a closed subset $Z\subset X$
  such that $Z\cap C$ is a nonempty finite set for every $C\in
  \mathscr{C}[X]$.
\end{proposition}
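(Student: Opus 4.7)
The plan is to construct $Z$ as a Purisch set, following the recipe recalled immediately before the statement, and then verify closedness using the compatible order structure on $X$. Fix a compatible linear order $\leq$ on $X$, with respect to which every component $C\in \mathscr{C}[X]$ is convex. For each $C\in \mathscr{C}[X]$ that is a singleton or open in $X$, I place exactly one chosen point $z_C$ of $C$ into $Z$; for each non-degenerate, non-open $C$, I place into $Z$ both noncut points of $C$, supplemented by one arbitrary additional point of $C$ if there is only one noncut point available. By construction $1\leq |Z\cap C|\leq 2$ for every $C\in \mathscr{C}[X]$, so the cardinality requirement is met.

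For closedness, I would take any $p\in X\setminus Z$ and exhibit an open neighbourhood of $p$ disjoint from $Z$. Let $C=\mathscr{C}[p]$. If $C$ is a singleton, then $p=z_C\in Z$, a contradiction, so this case does not arise. If $C$ is open, then $C$ is clopen in $X$, and $C\setminus\{z_C\}$ is an open subset of $X$ containing $p$ and disjoint from $Z$. The substantive case is when $C$ is non-degenerate and non-open: here $p\in C$ is not a noncut point of $C$, hence $p\notin Z\cap C$, so $p$ is a cut point of $C$, and there exist $a,b\in C$ with $a<p<b$. Using convexity of $C$ and the density of the induced order on $C$ (a non-degenerate connected linearly ordered space is a linear continuum, hence densely ordered), I would shrink $(a,b)_{\leq}$ to an open interval $(a',b')_{\leq}$ still containing $p$ but missing the at most two elements of $Z\cap C$. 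Convexity of $C$ forces $(a',b')_{\leq}\subset C$, so $(a',b')_{\leq}$ avoids $Z\setminus C$ automatically, yielding the required neighbourhood of $p$.

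The main technical point I expect to be the obstacle is the characterisation of noncut points of a component inside a suborderable space: with respect to a compatible linear order, a point $x\in C$ is a cut point of $C$ precisely when both $\{y\in C: y<x\}$ and $\{y\in C: y>x\}$ are nonempty, since these are then disjoint nonempty relatively open subsets covering $C\setminus\{x\}$. Consequently the noncut points of $C$ are exactly its $\leq$-extrema when they exist, ensuring that at most two of them ever arise and that Purisch's recipe is well defined. Once this observation is in hand, the closedness verification is a routine application of the base of convex open intervals together with density of the order inside each non-degenerate component.
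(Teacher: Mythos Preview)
Your proposal is correct and follows the same approach the paper takes: both rely on Purisch sets, with the paper simply citing the closedness of such sets from \cite{purisch:77} and stating the proposition as an observation, while you supply a direct verification of closedness using the convex base and the density of the order on non-degenerate components. Your identification of noncut points with $\leq$-extrema, and the observation that a non-open component must have at least one endpoint (so the construction is well defined), are exactly the facts underlying Purisch's result; the paper takes these for granted.
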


We are now ready to prove Theorem \ref{theorem-sel-top-9:1}.

\begin{proof}[Proof of Theorem \ref{theorem-sel-top-9:1}]
  According to Proposition \ref{proposition-sel-top-18:2} and
  Corollaries \ref{corollary-sel-top-18:1} and
  \ref{corollary-sel-top-10:2}, it remains to show that
  $\mathscr{C}[X]$ is metrizable if so is $X$. So, suppose that $X$ is
  a metrizable suborderable space. Also, let $Z\subset X$ be as in
  Proposition \ref{proposition-sel-top-10:1}. Since $Z$ is closed in
  $X$ and $\mathscr{C}:X\to \mathscr{C}[X]$ is a closed map, so is the
  restriction $g=\mathscr{C}\uhr Z:Z\to \mathscr{C}[X]$. Moreover, $g$
  is surjective because $Z\cap C\neq \emptyset$,
  $C\in \mathscr{C}[X]$, and $g$ is perfect because each $Z\cap C$,
  $C\in \mathscr{C}[X]$, is compact being finite. Thus,
  $\mathscr{C}[X]$ is a perfect image of a metrizable space and is
  itself metrizable, see \cite[Theorem 4.4.15]{engelking:89}. The
  proof of Theorem \ref{theorem-sel-top-9:1} is complete.
\end{proof}

In fact, Theorem \ref{theorem-sel-top-9:1} will be used implicitly in
the setting of the following consequence of it.

\begin{corollary}
  \label{corollary-sel-top-14:3}
  Let $X$ be a metrizable suborderable space and $\mathscr{U}$ be
  an open cover of $X$ such that each component of $X$ is contained in
  some element of $\mathscr{U}$. Then $\mathscr{U}$ has a discrete
  refinement. 
\end{corollary}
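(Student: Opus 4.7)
My plan is to push the problem down to the quotient space $\mathscr{C}[X]$, obtain a discrete clopen refinement there using its strong zero-dimensionality, and then pull back to $X$. For each component $C$ of $X$, pick $U_C\in\mathscr{U}$ with $C\subset U_C$, and apply Corollary \ref{corollary-sel-top-10:2} to obtain a clopen set $V_C\subset X$ with $C\subset V_C\subset U_C$. A clopen subset of $X$ is automatically saturated by components (any component it meets is connected and hence entirely contained in it), so $V_C=\mathscr{C}^{-1}(\mathscr{C}(V_C))$; the definition of the quotient topology then ensures that each $\mathscr{C}(V_C)$ is clopen in $\mathscr{C}[X]$. Thus $\{\mathscr{C}(V_C)\}$ is a clopen cover of $\mathscr{C}[X]$.

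By Theorem \ref{theorem-sel-top-9:1}, $\mathscr{C}[X]$ is metrizable and strongly zero-dimensional, hence paracompact with $\dim=0$. In such a space the following classical refinement is available for the cover $\{\mathscr{C}(V_C)\}$: first take a locally finite open refinement (paracompactness), shrink it to a closed refinement (normality), interpolate a clopen set between each closed piece and its containing open piece ($\dim=0$), and finally disjointify by well-ordering the index set and subtracting earlier members. Local finiteness is what keeps the intermediate unions clopen and so preserves the clopen property under disjointification. Moreover, any disjoint open cover is automatically discrete, since each member is itself an open neighbourhood of all of its points meeting only itself.

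Let $\mathscr{W}$ be the resulting discrete clopen refinement of $\{\mathscr{C}(V_C)\}$ in $\mathscr{C}[X]$. Pulling back, $\{\mathscr{C}^{-1}(W):W\in\mathscr{W}\}$ is a disjoint clopen cover of $X$, each of whose members lies in some $V_C\subset U_C\in\mathscr{U}$, so it refines $\mathscr{U}$; being a disjoint open cover of $X$, it is again discrete. The genuinely nontrivial ingredient here is the paracompact-$\dim 0$ refinement step; everything else is quotient bookkeeping afforded by Theorem \ref{theorem-sel-top-9:1} and Corollary \ref{corollary-sel-top-10:2}.
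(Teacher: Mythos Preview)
Your proof is correct and follows essentially the same route as the paper: push the cover to the quotient $\mathscr{C}[X]$, use that $\mathscr{C}[X]$ is strongly zero-dimensional metrizable to obtain a discrete (disjoint clopen) refinement there, and pull back. The only cosmetic differences are that you invoke Corollary~\ref{corollary-sel-top-10:2} (clopen sets) where the paper invokes Proposition~\ref{proposition-sel-top-18:2} (saturated convex open sets), and you spell out the paracompact\,+\,$\dim=0$ disjointification that the paper simply asserts.
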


\begin{proof}
  For every component $C\in \mathscr{C}[X]$ there exists
  $U_C\in \mathscr{U}$ with $C\subset U_C$. By Proposition
  \ref{proposition-sel-top-18:2}, each $C\in \mathscr{C}[X]$ is
  contained in an open subset $\mathscr{V}_C\subset \mathscr{C}[X]$
  such that $C\subset \mathscr{C}^{-1}(\mathscr{V}_C)\subset
  U_C$. Thus, $\big\{\mathscr{V}_C:C\in \mathscr{C}[X]\big\}$ is an
  open cover of $\mathscr{C}[X]$. By Theorem
  \ref{theorem-sel-top-9:1}, $\mathscr{C}[X]$ is a strongly
  zero-dimensional metrizable space, so there exists a discrete
  cover $\Omega$ of $\mathscr{C}[X]$ which refines
  $\big\{\mathscr{V}_C:C\in \mathscr{C}[X]\big\}$. The cover
  $\mathscr{C}^{-1}(\Omega) =\left\{\mathscr{C}^{-1}(\mathscr{W}):
    \mathscr{W}\in \Omega\right\}$ of $X$ is as required.
\end{proof}

\section{Partitions of Suborderable
  Metrizable Spaces}  

A metric $\rho$ on an ordered set $(X,\leq)$ is called \emph{convex}
if $\rho(a,b)\leq \rho(x,y)$, whenever $x,y,a,b\in X$ with
$x\leq a\leq b\leq y$, see \cite{purisch:77}. Equivalently, $\rho$ is
convex iff $\max\{\rho(x,y),\rho(y,z)\}\leq \rho(x,z)$, whenever
$x,y,z\in X$ with $x\leq y\leq z$, \cite[Proposition
2.4]{purisch:77}. According to \cite[Proposition
2.5]{purisch:77}, each metrizable suborderable space $(X,\leq)$ admits
a compatible convex metric $\rho$.\medskip 

Throughout this section, $(X,\leq)$ is a metrizable suborderable space
and $\rho$ is a fixed convex metric on $X$ compatible with its
topology. For $x\in X$ and $\varepsilon>0$, we will use
$\mathbf{O}(x,\varepsilon)$ for the \emph{open $\varepsilon$-ball}
centred at $x$. Moreover, for a subset $A\subset X$, let
$\mathbf{O}(A,\varepsilon)=\bigcup_{x\in A}\mathbf{O}(x,\varepsilon)$
and $\diam(A)$ be the \emph{diameter} of $A$. In case $A=C$ is a
component of $X$, another ``$\varepsilon$-neighbourhood'' of $C$ will
play an important role. Namely, to each component
$C\in \mathscr{C}[X]$, we associate the numbers
$\ell(C),r(C)\in \{0,1\}$ defined by
\begin{equation}
  \label{eq:sel-top-22:1}
  \ell(C)=\left|\overline{(\gets,C)_\leq}
    \cap C\right|\quad\text{and}\quad
  r(C)=\left|C\cap \overline{(C,\to)_\leq}\right|.
\end{equation}
These numbers simply indicate whether the ``$\leq$-open'' intervals
associated to $C$ are also closed, i.e.\ clopen, see
\eqref{eq:sel-top-23:2} and \eqref{eq:sel-top-34:1}. Namely,
$\ell(C)=0$ precisely when $(\gets,C)_\leq$ is clopen, equivalently
when $[C,\to)_\leq=X\setminus (\gets,C)_\leq$ is clopen. Similarly,
for the number $r(C)$. Based on this, for $C\in \mathscr{C}[X]$ and
$\varepsilon>0$, we set
\begin{equation}
  \label{eq:sel-top-23:1}
  \Delta(C,\varepsilon)=
  \begin{cases}
    C &\text{if $\ell(C)=0=r(C)$,}\\
    \mathbf{O}\left(C,\frac\varepsilon2\right)\cap (\gets,C]_\leq
    &\text{if
      $\ell(C)\neq 0= r(C)$,}\\
    \mathbf{O}\left(C,\frac\varepsilon2\right)\cap [C,\to)_\leq
    &\text{if
      $\ell(C)=0\neq r(C)$,}\\
    \mathbf{O}\left(C,\frac\varepsilon2\right) &\text{if
      $\ell(C)\neq 0\neq r(C)$.}\\
  \end{cases}
\end{equation}

Evidently, $\Delta(C,\varepsilon)$ is an open set containing $C$ and
contained in $\mathbf{O}\left(C,\frac\varepsilon2\right)$. Here are
some other properties of these neighbourhoods.

\begin{proposition}
  \label{proposition-sel-top-34:1}
  If $C\in \mathscr{C}[X]$ and $U\subset X$ is an open set with
  $C\subset U$, then there exists $\varepsilon>0$ such that
  $\Delta(C,\varepsilon)\subset U$. 
\end{proposition}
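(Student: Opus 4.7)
The plan is to split on the four clauses of \eqref{eq:sel-top-23:1}. The case $\ell(C)=0=r(C)$ is immediate because $\Delta(C,\varepsilon)=C\subset U$ for every $\varepsilon>0$. The remaining cases all reduce to the same core argument on each side of $C$, so I would describe the treatment for the left side under $\ell(C)\neq 0$; the right side with $r(C)\neq 0$ is symmetric, and the case $\ell(C)\neq 0\neq r(C)$ simply combines the two.

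The main preparatory step is to show that $\ell(C)\neq 0$ forces $C$ to have a $\leq$-minimum. Being a connected subset of the suborderable space $(X,\leq)$, the component $C$ is $\leq$-convex. If $p\in\overline{(\gets,C)_\leq}\cap C$, I claim $p=\min C$: were there $q\in C$ with $q<p$, convexity of $C$ would give $[q,p]\subset C$, and a net $x_\alpha\in(\gets,C)_\leq$ with $x_\alpha\to p$ would satisfy $x_\alpha<q<p$, so convexity of $\rho$ would yield $\rho(x_\alpha,q)\leq\rho(x_\alpha,p)\to 0$. Hence $q$ would also lie in $\overline{(\gets,C)_\leq}\cap C$, contradicting $\ell(C)\leq 1$. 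Dually, $r(C)\neq 0$ produces $q=\max C$.

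With $p=\min C$ in hand, the convex metric lets me replace $\rho(x,C)$ by the single distance $\rho(x,p)$ whenever $x<C$. Indeed $p\in C$ gives $\rho(x,p)\geq\rho(x,C)$, while $x\leq x\leq p\leq y$ combined with convexity of $\rho$ gives $\rho(x,p)\leq\rho(x,y)$ for every $y\in C$, hence $\rho(x,p)\leq\rho(x,C)$. This identification is the technical heart of the argument, since it converts a potentially uncontrollable infimum over the (possibly non-compact) set $C$ into control over a single point of $U$. Symmetrically, when $q=\max C$ exists, $\rho(x,C)=\rho(x,q)$ for every $x>C$.

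To finish, since $p,q\in C\subset U$ and $U$ is open, I would choose $\delta>0$ with $\mathbf{O}(p,\delta)\cup\mathbf{O}(q,\delta)\subset U$ (using only the relevant ball when only one endpoint is defined) and set $\varepsilon=2\delta$. A point of $\Delta(C,\varepsilon)$ is then either in $C\subset U$, or lies in $(\gets,C)_\leq$ within distance $\varepsilon/2$ of $C$ and so in $\mathbf{O}(p,\delta)\subset U$ by the previous paragraph, or symmetrically in $\mathbf{O}(q,\delta)\subset U$. I expect the main obstacle to be the existence step $p=\min C$ under $\ell(C)\neq 0$ (and its right counterpart): without it, the convex-metric reduction of $\rho(x,C)$ to the distance to a single endpoint would break down, and no choice of $\varepsilon$ could guarantee $\Delta(C,\varepsilon)\subset U$.
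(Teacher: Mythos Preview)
Your proof is correct and follows essentially the same approach as the paper's: split on the four cases of \eqref{eq:sel-top-23:1}, identify the endpoint $p=\min C$ (resp.\ $q=\max C$) when $\ell(C)=1$ (resp.\ $r(C)=1$), and use convexity of $\rho$ to reduce $\rho(x,C)$ to $\rho(x,p)$ for $x<C$. The only difference is that you supply an explicit argument for why $\ell(C)=1$ forces $p=\min C$, which the paper simply asserts; your convex-metric justification is valid (though one could also argue order-theoretically via $\overline{(\gets,C)_\leq}\subset(\gets,q]_\leq$).
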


\begin{proof}
  If $\ell(C)=0=r(C)$, this is obvious because
  $\Delta(C,\varepsilon)=C\subset U$, for every $\varepsilon>0$, see
  \eqref{eq:sel-top-23:1}. If $\ell(C)=1$, let $p\in C$ be the first
  element of $C$, i.e.\ $p\leq C$, and $\varepsilon>0$ be such that
  $\mathbf{O}\left(p,\frac\varepsilon2\right)\subset U$. If
  $x\in \Delta(C,\varepsilon)$ with $x<C$, then
  $x\in \mathbf{O}\left(C,\frac\varepsilon2\right)$ and therefore
  $\rho(x,y)<\frac\varepsilon2$ for some $y\in C$. Since the metric
  $\rho$ is convex and $x<p\leq y$, we get that
  $x\in \mathbf{O}\left(p,\frac\varepsilon2\right)$. Thus, by
  \eqref{eq:sel-top-23:1},
  $\Delta(C,\varepsilon)\subset
  \mathbf{O}\left(p,\frac\varepsilon2\right)\cup C\subset U$ provided
  $r(C)=0$. The case $\ell(C)=0\neq r(C)$ is completely identical, it
  follows by applying the same argument with the last element of $C$.
  Similarly, the property follows when $\ell(C)=1=r(C)$.
\end{proof}

\begin{proposition}
  \label{proposition-sel-top-14:2}
  If $C\in \mathscr{C}[X]$ and $\varepsilon>0$, then
  $\diam(S)\leq\frac\varepsilon2$ for any other component
  $S\in \mathscr{C}[X]$ with $S\subset \Delta(C,\varepsilon)$.
\end{proposition}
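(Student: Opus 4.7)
The plan is to split into cases according to the values of $\ell(C)$ and $r(C)$ appearing in the definition \eqref{eq:sel-top-23:1} of $\Delta(C,\varepsilon)$. When $\ell(C)=0=r(C)$, $\Delta(C,\varepsilon)=C$ and so no component distinct from $C$ is contained in $\Delta(C,\varepsilon)$; the assertion is vacuously true. In each of the other three cases I would first identify the relevant endpoint of $C$: if $\ell(C)\neq 0$, then $C$ has a $\leq$-minimal element $p$, and if $r(C)\neq 0$, then $C$ has a $\leq$-maximal element $q$. This is the only place where the suborderable structure and connectedness of the component do any real work: a point of $C$ which is a limit of $(\gets,C)_\leq$ must be a minimum of $C$, because $C$ is convex and $(\gets,C)_\leq$ lies strictly below it. This observation is already implicit in the proof of Proposition \ref{proposition-sel-top-34:1}.

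Next, given a component $S\neq C$ with $S\subset \Delta(C,\varepsilon)$, connectedness of $S$ together with $S\cap C=\emptyset$ places $S$ entirely on one side of $C$. The intersection with $(\gets,C]_\leq$ or $[C,\to)_\leq$ in \eqref{eq:sel-top-23:1} forces $S<C$ in the case $\ell(C)\neq 0=r(C)$ and $C<S$ in the case $\ell(C)=0\neq r(C)$; the remaining case $\ell(C)\neq 0\neq r(C)$ admits both possibilities, each of which reduces by symmetry to one of the first two.

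The main estimate, in the representative subcase $S<C$, runs as follows. For $s\in S$, using $S\subset \mathbf{O}(C,\varepsilon/2)$ I would pick $y\in C$ with $\rho(s,y)<\frac{\varepsilon}{2}$. Since $s<p\leq y$ and $\rho$ is convex, $\rho(s,p)\leq \rho(s,y)<\frac{\varepsilon}{2}$. For any $s_1,s_2\in S$ with $s_1\leq s_2$, then $s_1\leq s_2<p$, and a second application of convexity of $\rho$ gives $\rho(s_1,s_2)\leq \rho(s_1,p)<\frac{\varepsilon}{2}$, whence $\diam(S)\leq \frac{\varepsilon}{2}$. The subcase $C<S$ is symmetric, using $q$ in place of $p$. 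I do not foresee any serious obstacle beyond careful bookkeeping of the four cases; the substantive content is a two-fold application of convexity of $\rho$ combined with the ``one-sided'' geometry built into $\Delta(C,\varepsilon)$.
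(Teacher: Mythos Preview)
Your proof is correct and follows essentially the same idea as the paper's, using convexity of $\rho$ together with the fact that a component $S\neq C$ lies entirely on one side of $C$. The paper's version is slightly more streamlined: it observes once and for all that $\Delta(C,\varepsilon)\subset\mathbf{O}\left(C,\frac\varepsilon2\right)$, so no case split on $\ell(C),r(C)$ is needed, and it avoids the detour through an endpoint of $C$ by applying convexity directly---for $x,y\in S$ with $x\leq y$ and $S<C$, pick $z\in C$ with $\rho(x,z)<\frac\varepsilon2$; then $x\leq y<z$ gives $\rho(x,y)\leq\rho(x,z)<\frac\varepsilon2$ in a single step.
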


\begin{proof}
  Suppose that $S\in \mathscr{C}[X]\setminus\{C\}$ with
  $S\subset \Delta(C,\varepsilon)$. Then
  $S\subset \mathbf{O}\left(C,\frac\varepsilon2\right)$ and $S<C$ or
  $C<S$.  If $S<C$ and $x,y\in S$ with $x\leq y$, then there exists
  $z\in C$ such that $\rho(x,z)<\frac\varepsilon2$. Since $\rho$ is
  convex, this implies that $\rho(x,y)<\frac\varepsilon2$. Therefore,
  $\diam(S)\leq \frac\varepsilon2$. Similarly,
  $\diam(S)\leq\frac\varepsilon2$ provided $C<S$.
\end{proof}

If each component of $X$ is open, then $X$ is a topological sum of its
components, hence it is also semi-orderable, see \cite[Theorem
4.2]{gutev:07b}. Accordingly, the cws-number of $X$ is completely
resolved by Theorem \ref{theorem-sel-top-25:1}. Thus, the remaining
case for $\cws(X)$ is when $X$ has a component which is not open in
$X$. In particular, in this case, $X$ has infinitely many
components. In what follows, we will place this further restriction on
$X$, namely that it has \emph{infinitely many components}.\medskip

If $U\subset X$ is a clopen set, then
$\mathscr{C}[U]=\{C\in \mathscr{C}[X]: C\cap U\neq \emptyset\}$. Here,
an important role will be played by the collection $\mathscr{D}[X]$ of
all clopen subsets $U\subset X$ such that
\begin{equation}
  \label{eq:sel-top-27:1}
  U\in \mathscr{C}[X]\quad \text{or}\quad |\mathscr{C}[U]|\geq
  \omega. 
\end{equation}
The collection $\mathscr{D}[X]$ is in good accord with the
neighbourhoods of the components defined in \eqref{eq:sel-top-23:1}.

\begin{proposition}
  \label{proposition-sel-top-27:1}
  If $U\subset X$ is a clopen set such that
  $C\subset U\subset\Delta(C,\varepsilon)$ for some
  $C\in \mathscr{C}[X]$ and $\varepsilon>0$, then
  $U\in \mathscr{D}[X]$. Moreover, each $U\in \mathscr{D}[X]$ with
  ${|\mathscr{C}[U]|\geq \omega}$, has an open partition
  $U_1,U_2\in \mathscr{D}[X]$.
\end{proposition}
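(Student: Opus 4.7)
The plan is to treat the two assertions separately, leveraging \eqref{eq:sel-top-23:1} for the first and Theorem~\ref{theorem-sel-top-9:1} for the second.

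For the first assertion, I would case-split on the values of $\ell(C)$ and $r(C)$ from \eqref{eq:sel-top-22:1}. When $\ell(C)=0=r(C)$, the definition \eqref{eq:sel-top-23:1} collapses $\Delta(C,\varepsilon)$ to $C$ itself, whence $U=C\in \mathscr{C}[X]\subset \mathscr{D}[X]$. Otherwise, by symmetry I may assume $\ell(C)\neq 0$, in which case the $\leq$-minimum $p$ of $C$ exists and lies in $\overline{(\gets,C)_\leq}$. Since $U$ is an open neighbourhood of $p$, I pick a sequence $x_n\in (\gets,C)_\leq\cap U$ converging to $p$; each $x_n$ lies in a component $C_n\neq C$, and because $U$ is clopen and $C_n$ connected, $C_n\subset U$. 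If only finitely many distinct $C_n$'s occurred, some fixed $C^*\neq C$ would contain a subsequence converging to $p$, forcing $p\in\overline{C^*}=C^*$, which contradicts $C^*\cap C=\emptyset$. Hence $|\mathscr{C}[U]|\geq\omega$, and $U\in \mathscr{D}[X]$.

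For the second assertion I would pass to the quotient. Since $U$ is a clopen subspace of the metrizable suborderable space $X$, it is itself a metrizable suborderable space, so Theorem~\ref{theorem-sel-top-9:1} tells me that $\mathscr{C}[U]$ is a strongly zero-dimensional metrizable space. The aim is to produce a clopen partition $\mathscr{V}_1,\mathscr{V}_2$ of $\mathscr{C}[U]$ in which each piece is either a singleton or infinite, and then to set $U_i=\mathscr{C}^{-1}(\mathscr{V}_i)$, which is then automatically clopen in $X$. If $\mathscr{C}[U]$ has an isolated point $[C]$, I take $\mathscr{V}_1=\{[C]\}$ and $\mathscr{V}_2=\mathscr{C}[U]\setminus \mathscr{V}_1$, so that $U_1=C\in\mathscr{C}[X]$ while $|\mathscr{V}_2|\geq\omega$ because $\mathscr{C}[U]$ is infinite. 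Otherwise, every nontrivial clopen partition of $\mathscr{C}[U]$ (which exists by zero-dimensionality together with $|\mathscr{C}[U]|\geq 2$) must have both parts infinite, since a finite clopen subset of a Hausdorff space consists entirely of isolated points.

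The only delicate point is the pigeonhole step in part~1: an infinite convergent sequence in $X$ cannot be absorbed by finitely many components, because components are closed and pairwise disjoint. Beyond that, the argument is a small case analysis together with the routine clopen-separation machinery of strongly zero-dimensional metrizable spaces supplied by Theorem~\ref{theorem-sel-top-9:1}.
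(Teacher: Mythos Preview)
Your proposal is correct and follows essentially the same case structure as the paper's proof. For the first assertion your sequence argument makes explicit what the paper dismisses as ``an immediate consequence of \eqref{eq:sel-top-22:1} and \eqref{eq:sel-top-23:1}''; the underlying idea is identical (if $\ell(C)$ or $r(C)$ is nonzero then $C$ is not open in $U$, so $U\setminus C$ cannot be a finite union of closed components).

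For the second assertion there is a minor difference worth noting. The paper does not pass to the quotient or invoke Theorem~\ref{theorem-sel-top-9:1}; it works directly in $X$, using only the fact that components coincide with quasi-components to produce a separating clopen set when no component of $U$ is open. Your route through the strong zero-dimensionality of $\mathscr{C}[U]$ reaches the same destination with slightly heavier machinery, but it is perfectly sound and arguably more transparent about why each piece must be infinite in the perfect case.
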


\begin{proof}
  The first part is an immediate consequence of
  \eqref{eq:sel-top-22:1} and \eqref{eq:sel-top-23:1}. Take an element
  $U\in \mathscr{D}[X]$ with $|\mathscr{C}[U]|\geq \omega$. If $U$
  contains a clopen component $C\in \mathscr{C}[X]$, then
  $U_1=C\in \mathscr{D}[X]$ and
  $U_2=U\setminus C\in \mathscr{D}[X]$ form a partition of
  $U$. Otherwise, if $U$ doesn't contain an open component, take a
  nonempty clopen set $U_1\subset U$ with
  $U_2=U\setminus U_1\neq \emptyset$, which is possible because the
  components of $X$ coincide with the quasi-components. Then each
  $U_i$, $i=1,2$, contains infinitely many components, so
  $U_1,U_2\in \mathscr{D}[X]$.
\end{proof}

For $\varepsilon>0$ and a clopen set $U\subset X$, let
$\mathscr{C}_\varepsilon[U]$ be the family of all components of $U$ of
diameter $\geq \varepsilon$, and $\mathscr{C}_*[U]$ --- all
non-degenerate components of $U$, i.e.\
\begin{equation}
  \label{eq:sel-top-29:1}  
  \mathscr{C}_{\varepsilon}[U]=\{C\in \mathscr{C}[U]: \diam(C)\geq
  \varepsilon\}\quad\text{and}\quad
  \mathscr{C}_*[U]=\bigcup_{\varepsilon>0}\mathscr{C}_\varepsilon[U]. 
\end{equation}
Based on this, we also define the following subfamily of
$\mathscr{D}[X]$, namely
  \begin{multline}
  \label{eq:sel-top-29:2}
  \mathscr{D}[X,\varepsilon]=\big\{U\in \mathscr{D}[X]:
  \diam(U)<\varepsilon\quad \text{or}\\ 
  U\subset\Delta(C,\varepsilon),\ \text{for some $C\in
    \mathscr{C}_\varepsilon[U]$}\big\}.
\end{multline}

\begin{lemma}
  \label{lemma-sel-top-14:1}
  Whenever $\varepsilon>0$, each $Z\in \mathscr{D}[X]$ has a partition
  $\mathscr{U}\subset \mathscr{D}[X,\varepsilon]$ such that
  $|\mathscr{U}|\neq 2$.
\end{lemma}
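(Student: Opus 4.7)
The plan is to split the argument by whether $Z$ itself already belongs to $\mathscr{D}[X,\varepsilon]$. First, if $Z\in\mathscr{D}[X,\varepsilon]$, I would take the trivial partition $\mathscr{U}=\{Z\}$, which has cardinality $1\neq 2$. This case covers in particular $Z$ being a single component of $X$, since for such a $Z$ one checks directly from \eqref{eq:sel-top-29:1}--\eqref{eq:sel-top-29:2} that either $\diam(Z)<\varepsilon$, or $Z\in\mathscr{C}_{\varepsilon}[Z]$ with $Z\subset\Delta(Z,\varepsilon)$ trivially.

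The rest of the proof handles $Z\notin\mathscr{D}[X,\varepsilon]$, where necessarily $|\mathscr{C}[Z]|\geq\omega$, $\diam(Z)\geq\varepsilon$, and $Z$ is not contained in any $\Delta(C,\varepsilon)$ with $C\in\mathscr{C}_{\varepsilon}[Z]$. For each $C\in\mathscr{C}[Z]$ I would pick $\delta_C\in(0,\varepsilon]$ with $\Delta(C,\delta_C)\subset Z$ (using Proposition~\ref{proposition-sel-top-34:1}, since $Z$ is an open neighbourhood of $C$) and, whenever $\diam(C)<\varepsilon$, also with $\diam(\Delta(C,\delta_C))<\varepsilon$; the latter is possible via $\Delta(C,\delta_C)\subset\mathbf{O}(C,\delta_C/2)$ and the convexity of $\rho$. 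By Corollary~\ref{corollary-sel-top-10:2} I then pick a clopen $V_C$ with $C\subset V_C\subset\Delta(C,\delta_C)$. Proposition~\ref{proposition-sel-top-27:1} places $V_C$ in $\mathscr{D}[X]$, and the choice of $\delta_C$ places $V_C$ in $\mathscr{D}[X,\varepsilon]$.

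The family $\{V_C:C\in\mathscr{C}[Z]\}$ is an open cover of $Z$ in which every component lies inside some member, so Corollary~\ref{corollary-sel-top-14:3} yields a discrete refinement, which (being discrete and covering) is automatically a clopen partition $\mathscr{U}_0$ of $Z$. Exploiting that $\mathscr{C}[Z]$ is a strongly zero-dimensional metrizable space (Theorem~\ref{theorem-sel-top-9:1}), I would arrange this refinement so that every cell $U\in\mathscr{U}_0$ contains a distinguished anchor $C_U\in\mathscr{C}[U]$ with $U\subset V_{C_U}$. Any cell corresponding to a finite-but-not-singleton subset of $\mathscr{C}[Z]$ has its components clopen in $X$ (finite Hausdorff is discrete), and I would split such cells into their individual components. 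The resulting partition $\mathscr{U}$ then consists of clopen sets each being either a single component or having infinitely many components, so $\mathscr{U}\subset\mathscr{D}[X]$; the anchor condition places every cell in $\mathscr{D}[X,\varepsilon]$ either via the small-diameter alternative (when $\diam(C_U)<\varepsilon$) or via the inclusion $U\subset\Delta(C_U,\varepsilon)$ with $C_U\in\mathscr{C}_{\varepsilon}[U]$.

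The final step is to enforce $|\mathscr{U}|\neq 2$. The hypothesis $Z\notin\mathscr{D}[X,\varepsilon]$ forbids $|\mathscr{U}|=1$; if $|\mathscr{U}|=2$, at least one cell has infinitely many components, and I would apply Proposition~\ref{proposition-sel-top-27:1} to split it once more, producing a partition of size $3$. The hard part will be arranging this binary split to keep both new pieces inside $\mathscr{D}[X,\varepsilon]$: when the cell in question already has diameter $<\varepsilon$ this is automatic, but when it sits inside $\Delta(C_U,\varepsilon)$ for a big anchor $C_U$ I expect to have to put $C_U$ into one sub-piece and rely on Proposition~\ref{proposition-sel-top-14:2} together with a one-sided/two-sided analysis of $\Delta(C_U,\varepsilon)\setminus C_U$ via the convex metric to bound the diameter of the other sub-piece (whose components all have diameter $\leq\varepsilon/2$) strictly below $\varepsilon$.
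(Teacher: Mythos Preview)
Your plan follows the paper's overall arc: dispose of $Z\in\mathscr D[X,\varepsilon]$ trivially, otherwise cover the components of $Z$ by clopen sets in $\mathscr D[X,\varepsilon]$, pass to a discrete partition via Corollary~\ref{corollary-sel-top-14:3}, and repair $|\mathscr U|=2$ at the end. The organisational difference is that the paper treats large and small components in two separate passes: it first observes (via Proposition~\ref{proposition-sel-top-14:2}) that $\mathscr C_\varepsilon[Z]$ is discrete, uses collectionwise normality to enclose each $C\in\mathscr C_\varepsilon[Z]$ in its own clopen $U_C\subset\Delta(C,\varepsilon)$ forming $\mathscr U_0$, and only then covers the remainder $Y=Z\setminus\bigcup\mathscr U_0$ (whose components all have diameter $<\varepsilon$) by sets of small diameter to form $\mathscr U_1$. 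This guarantees by construction that every cell either has diameter $<\varepsilon$ or contains the large component anchoring it.

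Your single-pass version creates a real gap at the ``arrange anchors'' step. A cell $U$ of the refinement satisfies $U\subset V_C$ for \emph{some} $C$, but nothing forces $C\in\mathscr C[U]$. When $C$ is large and happens to fall into a different cell, you get $U\subset\Delta(C,\varepsilon)\setminus C$ with $\mathscr C_\varepsilon[U]=\emptyset$; in the two-sided case $\ell(C)=r(C)=1$ such a $U$ may straddle both sides of $C$ and then $\diam(U)\geq\diam(C)\geq\varepsilon$, so $U\notin\mathscr D[X,\varepsilon]$. Invoking ``$\mathscr C[Z]$ is strongly zero-dimensional metrizable'' does not by itself yield refinements whose cells contain the centre of the cover-member they sit inside; that would need an additional argument you have not supplied. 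The clean fix is precisely the paper's: isolate $\mathscr C_\varepsilon[Z]$ first so the anchor is built in rather than arranged after the fact. Your caution about the $|\mathscr U|=2$ repair is the same two-sided phenomenon in miniature, and your instinct there (put $C_U$ in one piece, control the other side-by-side via the convex metric) is right but still needs the one-sided splitting made explicit; the paper's two-pass structure makes this step short because one can always choose to split a member of $\mathscr U_1$.
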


\begin{proof}
  If $Z\in \mathscr{D}[X,\varepsilon]$, simply take
  $\mathscr{U}=\{Z\}$. Suppose that
  $Z\notin \mathscr{D}[X,\varepsilon]$, and take a point $z\in
  Z$. Then $Z$ contains a clopen set $V$ such that
  $\mathscr{C}[z]\subset V\subset\Delta(\mathscr{C}[z],\varepsilon)$,
  see Corollary \ref{corollary-sel-top-10:2}. Hence, by Proposition
  \ref{proposition-sel-top-14:2}, $V\cap C=\emptyset$ for every
  $C\in \mathscr{C}_\varepsilon[X]$ with $C\neq \mathscr{C}[z]$. Thus,
  $\mathscr{C}_\varepsilon[Z]$ is discrete in $Z$ and since $Z$ is
  collectionwise normal, there exists a discrete collection
  $\{V_C:C\in \mathscr{C}_\varepsilon[Z]\}$ of open subsets of $Z$
  such that ${C\subset V_C\subset \Delta(C,\varepsilon)}$,
  ${C\in \mathscr{C}_\varepsilon[Z]}$. Next, for each
  $C\in \mathscr{C}_\varepsilon[Z]$ take a clopen subset
  $U_C\subset Z$ with $C\subset U_C\subset V_C$, and set
  $\mathscr{U}_0=\{U_C: C\in
  \mathscr{C}_\varepsilon[Z]\}$. Accordingly, by
  \eqref{eq:sel-top-29:2} and Proposition
  \ref{proposition-sel-top-27:1},
  $\mathscr{U}_0\subset \mathscr{D}[X,\varepsilon]$. To construct the
  remaining part of the required partition $\mathscr{U}$, let
  $Y=Z\setminus \bigcup\mathscr{U}_0$ which is a clopen set with
  ${\diam(C)<\varepsilon}$, for every $C\in \mathscr{C}[Y]$. Hence,
  every $C\in \mathscr{C}[Y]$ is contained in an open set
  $W_C\subset Y$ such that $\diam(W_C)<\varepsilon$. So,
  $\{W_C:C\in \mathscr{C}[Y]\}$ is an open cover of $Y$ and each
  component of $Y$ is contained in some element of this
  cover. According to Corollary \ref{corollary-sel-top-14:3}, $Y$ has
  a discrete cover $\mathscr{V}$ which refines
  $\{W_C:C\in \mathscr{C}[Y]\}$. Then $Y$ also has a discrete cover
  $\mathscr{U}_1\subset \mathscr{D}[X]$ which refines
  $\{W_C:C\in \mathscr{C}[Y]\}$. Indeed, for each $V\in \mathscr{V}$,
  let $\mathscr{U}_V=\mathscr{C}[V]$ if $V$ contains finitely many
  components of $Z$, and $\mathscr{U}_V=\{V\}$ otherwise. Then
  $\mathscr{U}_1=\bigcup_{V\in \mathscr{V}}\mathscr{U}_V\subset
  \mathscr{D}[X]$ and by \eqref{eq:sel-top-29:2}, we also have that
  $\mathscr{U}_1\subset \mathscr{D}[X,\varepsilon]$. Thus,
  $\mathscr{U}=\mathscr{U}_0\cup \mathscr{U}_1\subset
  \mathscr{D}[X,\varepsilon]$ is a partition of $Z$. Finally, let us
  observe that we may always assume that $|\mathscr{U}|\neq 2$.
  Namely, in this case, $Z\notin \mathscr{D}[X,\varepsilon]$ and,
  therefore, $Z$ is not a component because $\dim(Z)\geq
  \varepsilon$. Hence, $Z$ contains infinitely many components because
  $Z\in \mathscr{D}[X]$, see \eqref{eq:sel-top-27:1}. If $\mathscr{U}$
  is finite, then an element $U\in \mathscr{U}$ also contains
  infinitely many components and by Proposition
  \ref{proposition-sel-top-27:1}, $U$ can be partitioned into two
  elements $U_1,U_2\in \mathscr{D}[X]$. Evidently,
  $U_1,U_2\in \mathscr{D}[X,\varepsilon]$ because
  $U\in \mathscr{D}[X,\varepsilon]$, and we may replace $U$ with $U_1$
  and $U_2$.
\end{proof}

\begin{lemma}
  \label{lemma-sel-top-15:1}
  Let $\varepsilon>0$ and $Z\in \mathscr{D}[X,2\varepsilon]$. If
  $C\in \mathscr{C}_{2\varepsilon}[Z]$ and $\ell(C),r(C)\in \{0,1\}$
  are as in \eqref{eq:sel-top-22:1}, then $Z$ has a discrete partition
  $\mathscr{U}\subset \mathscr{D}[X,\varepsilon]$ such that
  $|\mathscr{U}|\neq 2$ and
  \begin{enumerate}[label=\upshape{(\roman*)}]
  \item\label{item:sel-top-35:1} $U< C$ or $C< U$ for every $U\in
    \mathscr{U}$ with $U\cap C=\emptyset$,\smallskip
  \item\label{item:sel-top-35:2}
    $\big|\{U\in \mathscr{U}:U< C\}\big|\geq 6\ell(C)$ and
    $\big|\{U\in \mathscr{U}:C< U\}\big|\geq 6r(C)$.
  \end{enumerate}
  Moreover, $\diam(U)\leq 2\varepsilon$ for every $U\in \mathscr{U}$
  with $U\cap C=\emptyset$. 
\end{lemma}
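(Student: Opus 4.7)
The plan is to construct $\mathscr{U}$ by first isolating $C$ inside a clopen ``interval'' $U_C \subset \Delta(C, \varepsilon)$, and then partitioning the two sides of $Z \setminus U_C$ separately along the lines of Lemma \ref{lemma-sel-top-14:1}. I first observe that $Z \subset \Delta(C, 2\varepsilon)$: since $C \subset Z$ has diameter $\geq 2\varepsilon$, the first alternative in \eqref{eq:sel-top-29:2} fails, so $Z \subset \Delta(C_0, 2\varepsilon)$ for some $C_0 \in \mathscr{C}_{2\varepsilon}[Z]$, and Proposition \ref{proposition-sel-top-14:2} forces $C_0 = C$ because no other component of $Z$ can have diameter as large as $\varepsilon$.

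When $\ell(C) = 1$, the first point $p$ of $C$ is a limit of $(\gets, C)_\leq \cap Z$, and because every component $S < C$ is closed in $X$ with $p \notin S$, no single $S$ can accommodate a sequence converging to $p$; hence $Z \cap (\gets, C)_\leq$ contains infinitely many components of $X$, and symmetrically for $r(C) = 1$. I reserve six such components $S_1, \dots, S_6$ on the left (when $\ell(C) = 1$) and $T_1, \dots, T_6$ on the right (when $r(C) = 1$). Since components equal quasi-components in the suborderable $X$, each reserved component is separated from $C$ by a clopen set; intersecting all such separating clopens with $Z \cap \Delta(C, \varepsilon)$ and invoking Corollary \ref{corollary-sel-top-10:2} produces a clopen $U_C$ with $C \subset U_C \subset \Delta(C, \varepsilon)$ disjoint from every reserved component. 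Because $C \in \mathscr{C}_\varepsilon[U_C]$, this places $U_C$ in $\mathscr{D}[X, \varepsilon]$.

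Set $L = (Z \setminus U_C) \cap (\gets, C)_\leq$ and $R = (Z \setminus U_C) \cap (C, \to)_\leq$; both are clopen in $X$. Convexity of $\rho$ gives $\diam(L), \diam(R) \leq \varepsilon$, because any two points on the same side of $C$ are bounded by their distance to the nearer endpoint of $C$, itself smaller than $\varepsilon$. I partition each of $L, R$ by applying Lemma \ref{lemma-sel-top-14:1} when the side lies in $\mathscr{D}[X]$, and by listing its finitely many components (each automatically clopen and a single component of $X$, hence in $\mathscr{D}[X, \varepsilon]$) otherwise. To secure the quota $|\mathscr{U}^L| \geq 6$ when $\ell(C) = 1$, I further arrange pairwise disjoint clopen neighborhoods $V_1, \dots, V_6 \subset L$ of the reserved components, by iterated use of Corollary \ref{corollary-sel-top-10:2} against the quasi-component separation, and partition $L \setminus \bigcup_i V_i$ by the same recipe; similarly for $R$ when $r(C) = 1$.

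Letting $\mathscr{U} = \{U_C\} \cup \mathscr{U}^L \cup \mathscr{U}^R$, the collection is a clopen --- hence automatically discrete --- partition of $Z$ with all elements in $\mathscr{D}[X, \varepsilon]$. Condition \ref{item:sel-top-35:1} holds because $\mathscr{U}^L \subset (\gets, C)_\leq$ and $\mathscr{U}^R \subset (C, \to)_\leq$; condition \ref{item:sel-top-35:2} follows from the 6-element reservation, and the diameter bound is given by the previous paragraph. The main remaining obstacle is ensuring $|\mathscr{U}| \neq 2$: this is free when at least one of $\ell(C), r(C)$ equals $1$, since then $|\mathscr{U}| \geq 7$, but when $\ell(C) = r(C) = 0$ the identity $\Delta(C, \varepsilon) = C$ forces $U_C = C$, and a problematic $|\mathscr{U}| = 2$ could arise only if one of $L, R$ is empty while the other collapses to a single element of $\mathscr{D}[X, \varepsilon]$. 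In that case $Z = C \cup R$ with $L = \emptyset$, and $Z \in \mathscr{D}[X]$ together with $Z \neq C$ forces $|\mathscr{C}[R]| \geq \omega$; iterated application of Proposition \ref{proposition-sel-top-27:1} and Lemma \ref{lemma-sel-top-14:1} then refines $R$ into at least three elements of $\mathscr{D}[X, \varepsilon]$, so $|\mathscr{U}| \geq 4$.
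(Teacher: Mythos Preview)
Your approach is essentially the paper's: isolate $C$ in a clopen $U_C\subset\Delta(C,\varepsilon)$, split $Z\setminus U_C$ into the left piece $L$ and right piece $R$, and partition each side via Lemma~\ref{lemma-sel-top-14:1} or by listing components. Your diameter argument is in fact cleaner than the paper's --- you bound $\diam(L),\diam(R)\leq\varepsilon$ directly by convexity of $\rho$ and the containment $Z\subset\Delta(C,2\varepsilon)\subset\mathbf{O}(C,\varepsilon)$, whereas the paper argues indirectly through the structure of $\mathscr{D}[X,\varepsilon]$ and Proposition~\ref{proposition-sel-top-14:2} to obtain only $\diam(U)\leq 2\varepsilon$.

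Two small points deserve tightening. First, your reserved clopen neighbourhoods $V_1,\dots,V_6$ are not automatically in $\mathscr{D}[X]$: a clopen set with finitely many (but more than one) components fails \eqref{eq:sel-top-27:1}. You should take each $V_i$ inside $\Delta(S_i,\varepsilon')$ for a suitable $\varepsilon'$ and invoke Proposition~\ref{proposition-sel-top-27:1}, or simply follow the paper's route of applying Lemma~\ref{lemma-sel-top-14:1} to the whole of $L$ (which lies in $\mathscr{D}[X]$ since $\ell(C)=1$ gives infinitely many components) and then splitting further via Proposition~\ref{proposition-sel-top-27:1} to reach six pieces. Second, your treatment of the case $\ell(C)=r(C)=0$ is over-engineered: you already established $Z\subset\Delta(C,2\varepsilon)$, and when both indicators vanish this set equals $C$, so $Z=C$, $L=R=\emptyset$, and $|\mathscr{U}|=1$ immediately. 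The scenario ``$L=\emptyset$, $R$ nonempty'' that you go on to analyse cannot occur.
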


\begin{proof}
  The set $\{S\in \mathscr{C}[Z]:S< C\}$ is infinite provided
  $\ell(C)=1$; similarly, so is the set $\{S\in \mathscr{C}[Z]:C< S\}$
  if $r(C)=1$. Hence, according to Corollary
  \ref{corollary-sel-top-10:2} and Proposition
  \ref{proposition-sel-top-27:1}, there is a clopen set
  $U_C\subset Z$, with $C\subset U_C\subset \Delta(C,\varepsilon)$,
  such that
  \begin{enumerate}
  \item \label{item:sel-top-35:3}
    $\big|\{S\in \mathscr{C}[Z\setminus U_C]: S< C\}\big|\geq
    6\ell(C)$\quad and\smallskip
  \item \label{item:sel-top-35:4}
    $\big|\{S\in \mathscr{C}[Z\setminus U_C]: C< S\}\big|\geq 6r(C)$.
  \end{enumerate}
  Let $p\in C$.  Then
  $Y_\ell=Z\cap \left[(\leftarrow,p)_{\leq}\setminus U_C\right]$ is a
  clopen subset of $Z$ such that ${S< C}$ for every component
  $S\in \mathscr{C}[Y_\ell]$. Moreover, by \ref{item:sel-top-35:3},
  $Y_\ell$ has at least $6\ell(C)$-many components. If
  $Y_\ell=\emptyset$, set $\mathscr{U}_1=\emptyset$. If $Y_\ell$ has
  finitely many components, take
  $\mathscr{U}_1=\mathscr{C}[Y_\ell]\subset \mathscr{D}[X]$. Finally,
  if $Y_\ell$ has infinitely many components, by Lemma
  \ref{lemma-sel-top-14:1}, it has a partition
  $\mathscr{U}_1\subset \mathscr{D}[X,\varepsilon]$ with
  $|\mathscr{U}_1|\neq 2$. In this case, precisely as in the proof of
  Lemma \ref{lemma-sel-top-14:1} (using Proposition
  \ref{proposition-sel-top-27:1}), we may assume that
  $|\mathscr{U}_1|\geq 6\geq 6\ell(C)$. Evidently, by
  \ref{item:sel-top-35:3}, $U< C$ for every $U\in
  \mathscr{U}_1$. Repeating the same argument but now with
  $Y_r=Z\cap \left[(p\to)_\leq\setminus U_C\right]$ instead of
  $Y_\ell$ and \ref{item:sel-top-35:4} instated of
  \ref{item:sel-top-35:3}, we get a partition
  $\mathscr{U}_2\subset \mathscr{D}[X,\varepsilon]$ of $Y_r$ such that
  $|\mathscr{U}_2|\geq 6r(C)$ and $C< U$ for every
  $U\in \mathscr{U}_2$. Then
  $\mathscr{U}=\mathscr{U}_1\cup\{U_C\}\cup \mathscr{U}_2$ is as
  required because $U_C\in \mathscr{D}[X,\varepsilon]$, see
  \eqref{eq:sel-top-29:2}.\smallskip

  Finally, let us see that $\diam(U)\leq 2\varepsilon$ for every
  $U\in \mathscr{U}$ with $U\cap C=\emptyset$. Indeed, for
  $U\in \mathscr{U}$, we have that $U\in \mathscr{D}[X,\varepsilon]$
  and, therefore, $\diam(U)\geq \varepsilon$ implies that
  $U\subset \Delta(S,\varepsilon)$ for some
  $S\in \mathscr{C}_\varepsilon[U]$. If moreover $U\cap C=\emptyset$,
  then $S\neq C$ and by Proposition \ref{proposition-sel-top-14:2},
  $\diam(S)\leq \varepsilon$ because
  $Z\in \mathscr{D}[X,2\varepsilon]$ and
  $C\in \mathscr{C}_{2\varepsilon}[Z]$, see \eqref{eq:sel-top-29:1}
  and \eqref{eq:sel-top-29:2}. Since
  $U\subset \Delta(S,\varepsilon)\subset
  \mathbf{O}\left(S,\frac\varepsilon2\right)$, see
  \eqref{eq:sel-top-23:1}, we finally get that
  $\diam(U)\leq 2 \varepsilon$.
\end{proof}

We conclude this section by extending the construction in Lemmas
\ref{lemma-sel-top-14:1} and \ref{lemma-sel-top-15:1} to a system of
discrete covers on $X$. To this end, let us recall that a partially
ordered set $(T,\preceq)$ is a \emph{tree} if $\{s\in T:s\preceq t\}$
is well ordered, for every $t\in T$. For a tree $(T,\preceq)$, we use
${T}(0)$ to denote the minimal elements of $T$.  Given an ordinal
$\alpha$, if ${T}(\beta)$ is defined for every $\beta<\alpha$, then
${T}(\alpha)$ denotes the minimal elements of
$T\setminus \bigcup\{{T}(\beta):\beta< \alpha\}$.  The set
${T}(\alpha)$ is called the \emph{$\alpha^{\text{th}}$-level} of $T$,
while the \emph{height} of $T$ is the least ordinal $\alpha$ such that
$T=\bigcup\{{T}(\beta):\beta< \alpha\}$. We say that $T$ is
\emph{$\alpha$-levelled} if its height is $\alpha$. A maximal linearly
ordered subset of $T$ is called a \emph{branch}, and $\mathscr{B}(T)$
is used to denote the set of all branches of $T$.\medskip

For a tree $(T,\preceq)$, the \emph{node} of $t\in T$ is the subset
$\node(t)\subset T$ of all immediate successors of $t$, and we say
that $T$ is \emph{pruned} if $\node(t)\neq \emptyset$, for every
$t\in T$. In these terms, an $\omega$-levelled tree $(T,\preceq)$ is
pruned if each branch $\beta\in \mathscr{B}(T)$ is infinite.  In what
follows, we will write $\mathscr{S}:T\sto X$ to designate that
$\mathscr{S}$ is a \emph{set-valued} (or \emph{multi-valued})
\emph{mapping} from $T$ to the nonempty subsets of $X$.  In these
terms, for a pruned $\omega$-levelled tree $(T,\preceq)$, a mapping
$\mathscr{S}:T\sto X$ is a \emph{sieve} on $X$ if
$X=\bigcup\{\mathscr{S}(t):t\in T(0)\}$ and
$\mathscr{S}(t)=\bigcup\{\mathscr{S}(s):s\in \node(t)\}$ for every
$t\in T$. A sieve $\mathscr{S}:T\sto X$ on $X$ is called
\emph{discrete} if each indexed family
$\{\mathscr{S}(t): t\in T(n)\}$, $n<\omega$, is a discrete cover of
$X$. \medskip

In the theorem below, an important role will be played by special
trees. Namely, we shall say that a tree $(T,\preceq)$ is
\emph{anti-binary} if $|T(0)|\neq 2$ and $|\node(t)|\neq 2$, for every
$t\in T$.  Moreover, if $C\in \mathscr{C}_*[X]$,
then we will use $\kappa[C]$ to denote the least $n<\omega$ such that
${C\in \mathscr{C}_{2^{-n}}[X]}$, i.e.\
\begin{equation}
  \label{eq:sel-top-34:2}
  \kappa[C]=\min\big\{n<\omega: \diam(C)\geq 2^{-n}\big\}.
\end{equation}

\begin{theorem}
  \label{theorem-sel-top-23:1}
  There exists an anti-binary pruned $\omega$-levelled tree $T$ and a
  discrete sieve $\mathscr{S}:T\to \mathscr{D}[X]$ on $X$ such that
  $\mathscr{S}\uhr T(n): T(n)\to \mathscr{D}[X,2^{-n}]$, for every
  $n<\omega$.  Moreover, if $C\in \mathscr{C}_*[X]$ and
  $\beta\in \mathscr{B}(T)$ is the branch with
  $C\subset \bigcap_{t\in \beta}\mathscr{S}(t)$, then for
  $n\geq \kappa[C]$, $t\in \beta\cap T(n)$ and
  $p\in \beta\cap \node(t)$, 
  \begin{enumerate}[label=\upshape{(\thesection.\arabic*)}]
    \addtocounter{enumi}{6}  
  \item\label{item:3}
    $\begin{cases}
      \diam(\mathscr{S}(s))\leq 2^{-n+1} &\text{and}\\
      \mathscr{S}(s)< C\ \text{or}\ C< \mathscr{S}(s),
    &\text{whenever $s\in \node(t)$ with
      $s\neq p$,}
    \end{cases}$\smallskip
  \item\label{item:4}
    $
    \begin{cases}
      \big|\{s\in \node(t):\mathscr{S}(s)< C\}\big|\geq 6\ell(C)\quad
      \text{and}\\
      {\big|\{s\in \node(t):C< \mathscr{S}(s)\}\big|\geq 6r(C)}.
    \end{cases}
    $ 
  \end{enumerate}
\end{theorem}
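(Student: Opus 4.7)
The plan is to construct the tree $T$ and the sieve $\mathscr{S}$ by recursion on the levels $T(n)$, $n<\omega$, refining each sieve-element using Lemma \ref{lemma-sel-top-14:1} when no large-diameter component is present and Lemma \ref{lemma-sel-top-15:1} when one is. The anti-binary condition will be automatic from the conclusions $|\mathscr{U}|\neq 2$ in those lemmas, and the discreteness of the sieve will be preserved inductively because each of the refining partitions is discrete and sits inside a clopen set from the previous level.

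For the base level, $X\in\mathscr{D}[X]$ because $X$ has infinitely many components, so Lemma \ref{lemma-sel-top-14:1} with $\varepsilon=1$ yields a discrete partition $\mathscr{U}_0\subset\mathscr{D}[X,1]$ with $|\mathscr{U}_0|\neq 2$; take $T(0)$ to index $\mathscr{U}_0$ and define $\mathscr{S}$ by this bijection. For the inductive step, suppose $\{\mathscr{S}(t):t\in T(n)\}$ is a discrete partition of $X$ lying in $\mathscr{D}[X,2^{-n}]$. For each $t\in T(n)$ we distinguish two cases. If $\diam(\mathscr{S}(t))<2^{-n}$, apply Lemma \ref{lemma-sel-top-14:1} with $Z=\mathscr{S}(t)$ and $\varepsilon=2^{-(n+1)}$. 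Otherwise, by \eqref{eq:sel-top-29:2} we have $\mathscr{S}(t)\subset\Delta(C_t,2^{-n})$ for some $C_t\in \mathscr{C}_{2^{-n}}[\mathscr{S}(t)]$, and Proposition \ref{proposition-sel-top-14:2} shows $C_t$ is the unique component of $\mathscr{S}(t)$ of diameter $\geq 2^{-n}$, so we apply Lemma \ref{lemma-sel-top-15:1} with $Z=\mathscr{S}(t)$, $\varepsilon=2^{-(n+1)}$ and $C=C_t$. Either way, the resulting partition $\mathscr{U}_t\subset \mathscr{D}[X,2^{-(n+1)}]$ is discrete with $|\mathscr{U}_t|\neq 2$; take $\node(t)$ to index $\mathscr{U}_t$ and extend $\mathscr{S}$ accordingly.

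Discreteness at level $n+1$ then follows from discreteness at level $n$ together with the discreteness of each $\mathscr{U}_t$ inside the clopen $\mathscr{S}(t)$; the sieve identity $\mathscr{S}(t)=\bigcup\{\mathscr{S}(s):s\in\node(t)\}$ is built in; and $T$ is pruned and anti-binary because every $\mathscr{U}_t$ is nonempty and satisfies $|\mathscr{U}_t|\neq 2$. Given $C\in\mathscr{C}_*[X]$, the connectedness of $C$ together with the nestedness of the sieve singles out a unique branch $\beta$ with $C\subset\bigcap_{t\in\beta}\mathscr{S}(t)$. Whenever $n\geq\kappa[C]$, we have $\diam(C)\geq 2^{-n}$, so at $t\in\beta\cap T(n)$ the second case applies with $C_t=C$, and the child $p\in\node(t)$ lying in $\beta$ is precisely the one whose $\mathscr{U}_t$-element contains $C$. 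Then \ref{item:3} follows from part (i) of Lemma \ref{lemma-sel-top-15:1} together with its final diameter bound (noting $2\varepsilon=2^{-n}\leq 2^{-n+1}$), and \ref{item:4} is exactly its part (ii).

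The main delicate point is ensuring that the refinement produced at a single node $t$ serves, simultaneously and without conflict, every non-degenerate component whose branch passes through $t$. This is resolved by the uniqueness of the ``large'' component: Proposition \ref{proposition-sel-top-14:2} forces any component of $\mathscr{S}(t)\subset\Delta(C_t,2^{-n})$ other than $C_t$ to have diameter at most $2^{-(n+1)}<2^{-n}$, so $C_t$ is determined by $t$ alone, and any $C$ with $n\geq\kappa[C]$ whose branch passes through $t$ must coincide with $C_t$. Hence a single application of Lemma \ref{lemma-sel-top-15:1} at $t$ handles all such $C$ at once, making the whole recursion unambiguous.
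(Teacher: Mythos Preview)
Your proof is correct and follows essentially the same approach as the paper: both construct the tree level by level, applying Lemma~\ref{lemma-sel-top-14:1} at a node when no component of diameter $\geq 2^{-n}$ is present and Lemma~\ref{lemma-sel-top-15:1} when one is, with Proposition~\ref{proposition-sel-top-14:2} ensuring uniqueness of the large component so that conditions \ref{item:3} and \ref{item:4} are unambiguous. Your case distinction via $\diam(\mathscr{S}(t))<2^{-n}$ is equivalent to the paper's via $\mathscr{C}_{2^{-n}}[\mathscr{S}(t)]=\emptyset$, by \eqref{eq:sel-top-29:2}.
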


\begin{proof}
  By Lemma \ref{lemma-sel-top-14:1}, $X$ has a discrete partition
  $\mathscr{U}\subset \mathscr{D}[X,2^0]$ such that
  $|\mathscr{U}|\neq 2$. Take $T(0)=\mathscr{U}$ and let
  $\mathscr{S}:T(0)\to \mathscr{U}$ be the identity of $\mathscr{U}$.
  To construct the next level $T(1)$ of the tree and the values
  $\mathscr{S}(s)$, $s\in T(1)$, of the sieve $\mathscr{S}$, it
  suffices to construct discrete partitions
  $\mathscr{U}_t\subset \mathscr{D}\left[X,2^{-1}\right]$ of each
  $\mathscr{S}(t)$, $t\in T(0)$, such that $|\mathscr{U}_t|\neq 2$ and
  \ref{item:3} and \ref{item:4} hold for the elements of
  $\mathscr{U}_t$. Then we can set $\node(t)=(\mathscr{U}_t,t)$,
  $t\in T(0)$, and let $\mathscr{S}\uhr \node(t)$ be the projection
  on the first factor. Finally, we may take
  $T(1)=\bigcup_{t\in T(0)}\node(t)$ and define $t\prec s$ whenever
  $t\in T(0)$ and $s\in\node(t)$.\smallskip

  Turning to the construction of the partitions $\mathscr{U}_t$ of
  $\mathscr{S}(t)$, $t\in T(0)$, we distinguish the following two
  cases.  If $\mathscr{C}_{2^0}[\mathscr{S}(t)]=\emptyset$, then one
  can take $\mathscr{U}_t\subset \mathscr{D}\left[X,2^{-1}\right]$ as
  in Lemma \ref{lemma-sel-top-14:1} applied with
  $Z=\mathscr{S}(t)$. Otherwise, if
  $C\in \mathscr{C}_{2^0}[\mathscr{S}(t)]$, then by
  \eqref{eq:sel-top-29:1} and \eqref{eq:sel-top-29:2},
  $\mathscr{S}(t)\subset \Delta(C,2^{0})$ and it follows from
  Proposition \ref{proposition-sel-top-14:2} that
  $\mathscr{C}_{2^0}[\mathscr{S}(t)]=\{C\}$ is a singleton.  Hence, in
  this case, we may take $\mathscr{U}_t\subset \mathscr{D}[X,2^{-1}]$
  as in Lemma \ref{lemma-sel-top-15:1} applied with $Z=\mathscr{S}(t)$
  and the component $C\in
  \mathscr{C}_{2^0}[\mathscr{S}(t)]$. According to
  \ref{item:sel-top-35:1} and \ref{item:sel-top-35:2} of Lemma
  \ref{lemma-sel-top-15:1}, the resulting values on the associated
  sieve $\mathscr{S}$ on $\node(t)=(\mathscr{U}_t,t)$, i.e.\ the
  projection on $\mathscr{U}_t$, will satisfy \ref{item:3} and
  \ref{item:4} with respect to this component $C$ and the element
  $p\in \node(t)$ with $\mathscr{S}(p)=C$. The construction can be
  carried on by induction.
\end{proof}
  
\section{Sieve-Invariant Weak Selections} 

In this section, we finalise the proof of the following theorem
dealing with the cws-number of metrizable suborderable spaces.

\begin{theorem}
  \label{theorem-sel-top-19:1}
  If $X$ is a suborderable metrizable space which has infinitely many
  components, then $\cws(X)=1$.
\end{theorem}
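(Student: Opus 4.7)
The plan is to manufacture a single continuous weak selection $\sigma$ for $X$ and show that $\mathscr{T}_\sigma$ coincides with the topology of $X$, using Theorem \ref{theorem-sel-top-23:1} as scaffolding and making $\sigma$ sieve-invariant in the spirit of the construction $\sigma*h$ from Section \ref{sec:weak-select-part}. First I would dispose of the easy case: if every component of $X$ is open, then $X$ is a topological sum of its components, hence semi-orderable by \cite[Theorem 4.2]{gutev:07b}, and Theorem \ref{theorem-sel-top-25:1} already yields $\cws(X)=1$ since $X$ has at least three components. Thereafter I work under the essential assumption that $X$ carries a non-open component, and I invoke Theorem \ref{theorem-sel-top-23:1} to obtain an anti-binary pruned $\omega$-levelled tree $T$ together with a discrete sieve $\mathscr{S}:T\sto \mathscr{D}[X]$. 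Since every component $C\in\mathscr{C}[X]$ is connected and suborderable, hence orderable, I also fix a compatible linear order on $C$ and denote by $h_C\in\sel_2(C)$ the continuous weak selection it generates.

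The core of the construction is, for each $t\in T$, a weak selection $\sigma_t$ on the partition $\mathscr{U}_t=\{\mathscr{S}(s):s\in\node(t)\}$ of $\mathscr{S}(t)$, together with a weak selection $\sigma_*$ on the initial partition $\{\mathscr{S}(t):t\in T(0)\}$ of $X$. Anti-binarity ($|\node(t)|\neq 2$ and $|T(0)|\neq 2$) makes Section \ref{sec:weak-select-part} directly applicable. If $\mathscr{S}(t)$ (with $t\in T(n)$) contains no component of diameter $\geq 2^{-n}$, I take $\sigma_t$ from Lemma \ref{lemma-sel-top-4:1}. Otherwise, by \eqref{eq:sel-top-29:1}--\eqref{eq:sel-top-29:2} and Proposition \ref{proposition-sel-top-14:2}, such a component $C$ is unique, and so is the child $p\in\node(t)$ with $C\subset\mathscr{S}(p)$; reading off $\ell(C),r(C)\in\{0,1\}$ from \eqref{eq:sel-top-22:1}, I choose $\sigma_t$ by Lemma \ref{lemma-sel-top-4:1} if $\ell(C)=r(C)=0$, by Lemma \ref{lemma-sel-top-32:1} (or its mirror) with $S=\mathscr{S}(p)$ when exactly one of $\ell(C),r(C)$ is nonzero, and by Corollary \ref{corollary-sel-top-14:1} with $S=\mathscr{S}(p)$ when $\ell(C)=r(C)=1$. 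Property \ref{item:4} of Theorem \ref{theorem-sel-top-23:1} supplies the $6\ell(C)$ and $6r(C)$ sister cells demanded on each side. I then splice all pieces into a single $\sigma\in\sel_2(X)$ by the rule: for distinct $x,y$ in a common component $C$, set $\sigma(\{x,y\})=h_C(\{x,y\})$; otherwise, since $\mathscr{S}(t)\subset\Delta(C,2^{-n})\subset \mathbf{O}(C,2^{-n-1})$ tightens around components, there is a least level $n$ at which the sieve separates $x$ and $y$, and $\sigma(\{x,y\})$ follows $\sigma_t$ (or $\sigma_*$ if $n=0$) applied to the distinct children $\mathscr{S}(p_x),\mathscr{S}(p_y)$ at the common parent $t$. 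By construction $\sigma$ is $\mathscr{U}_t$-invariant on each $\mathscr{S}(t)$ and restricts to $h_C$ on each component.

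Finally I would verify that $\mathscr{T}_\sigma$ equals the topology of $X$; both inclusions reduce to exhibiting a $\sigma$-interval neighbourhood base at every $x\in X$. Descending along the branch of $T$ determined by $C=\mathscr{C}[x]$, the cells $\mathscr{S}(t_n)$ are clopen in $X$ and, by the selections $\sigma_{t_{n-1}}$ chosen above, belong to $\mathscr{T}_\sigma$ as well; intersecting them with $\sigma$-open intervals whose endpoints lie in $C$ (where $\sigma$ coincides with $h_C$ and generates the topology of $C$) is the natural candidate for such a base. The main obstacle is the boundary situation when $C$ is non-open and $x\in C$ is a one-sided limit of $X\setminus C$: every $X$-neighbourhood of $x$ then carries a non-trivial tail in $X\setminus C$, and one must show that such a tail is captured or avoided by a $\sigma$-interval with endpoints in $C$. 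This is exactly what the case split on $\ell(C),r(C)$ was arranged for: the selections of Lemma \ref{lemma-sel-top-32:1} and Corollary \ref{corollary-sel-top-14:1} place $\mathscr{S}(p)\supset C$ at the correct extreme (or in the middle) of $\sigma_t$, while property \ref{item:3} of Theorem \ref{theorem-sel-top-23:1} puts every sister cell that contains such a tail on the correct side of $C$. Combined with the convexity of $\rho$ and the control $\mathscr{S}(t_n)\subset\Delta(C,2^{-n})$, this is what finally forces such tails into $\sigma$-intervals whose endpoints lie in $C$ and closes the argument.
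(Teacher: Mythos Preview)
Your proposal follows essentially the same route as the paper: build the sieve of Theorem~\ref{theorem-sel-top-23:1}, choose a node selection $\sigma_t$ at each $t$ via the case split on $\ell(C),r(C)$ using Lemma~\ref{lemma-sel-top-4:1}, Lemma~\ref{lemma-sel-top-32:1} and Corollary~\ref{corollary-sel-top-14:1}, and splice everything into a single $\mathscr{S}$-invariant weak selection. Two points deserve tightening. First, the order $h_C$ on each component cannot be an \emph{arbitrary} compatible order: it must be the restriction of the global compatible order $\leq$ on $X$ (this is what the paper does). The ``correct side'' and ``correct extreme'' mechanism you invoke compares sister cells to $C$ via the global $\leq$ (that is how \ref{item:3} and \ref{item:4} are stated), so if $h_C$ happened to reverse $\leq\uhr C$, then for a non-open endpoint $a\in C$ the $\sigma$-half-line $(p,\to)_{\leq_\sigma}$ would contain $a$ but none of the nearby sister-cell points accumulating at $a$, and would fail to be open. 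Second, you must verify continuity of $\sigma$ separately; $\mathscr{T}_\sigma=\mathscr{T}$ alone does not imply it (cf.\ \cite[Example~3.6]{gutev-nogura:01a}). The paper handles both issues via the explicit compatibility condition \ref{item:10} and the resulting identity \eqref{eq:sel-top-34:4}, which simultaneously gives continuity at pairs inside a common component and the missing half of $\mathscr{T}\subset\mathscr{T}_\sigma$. With these two adjustments your outline coincides with the paper's argument.
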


The proof of Theorem \ref{theorem-sel-top-19:1} is based on two
concepts associated to sieves.  The one is about invariant weak
selections. Namely, for a sieve $\mathscr{S}:T\sto X$ on a set $X$, we
shall say that a weak selection $g\in \sel_2(X)$ is
\emph{$\mathscr{S}$-invariant} if each of the families
$\{\mathscr{S}(t):t\in T(n)\}$, $n<\omega$, is $g$-decisive. Such a
sieve $\mathscr{S}$ is not  as arbitrary as it might seem at
first. Indeed, in this case each family $\{\mathscr{S}(t):t\in
T(n)\}$, $n<\omega$, must be pairwise disjoint being $g$-decisive.  In
particular, for each point $x\in X$ there exists a unique branch
$\beta[x]\in \mathscr{B}(T)$ with $x\in \bigcap_{t\in \beta[x]}
\mathscr{S}(t)$. So, it is defined a natural map $\beta:X\to
\mathscr{B}(T)$, namely
\begin{equation}
  \label{eq:sel-top-1st:1}
  X\ni x\quad\longrightarrow\quad \beta[x]=\{t\in T: x\in
  \mathscr{S}(t)\}\in 
  \mathscr{B}(T). 
\end{equation}
This map is useful to handle the continuity of $\mathscr{S}$-invariant
weak selections.

\begin{proposition}
  \label{proposition-sel-top-35:1}
  Let $\mathscr{S}:T\sto X$ be a discrete sieve on a space $X$ and $g$
  be an $\mathscr{S}$-invariant weak selection for $X$. Then $g$ is
  continuous at each pair $\{x,y\}\in \mathscr{F}_2(X)$ with
  $\beta[x]\neq \beta[y]$.
\end{proposition}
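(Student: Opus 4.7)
The plan is to use the continuity criterion for weak selections recalled in the introduction: $g$ is continuous at $\{x,y\}$ with $x<_g y$ iff there are open $U\ni x$ and $V\ni y$ with $s<_g t$ for all $s\in U$ and $t\in V$. So I fix such a pair with $\beta[x]\neq \beta[y]$, and after relabelling if necessary assume $x<_g y$. The task reduces to producing the neighbourhoods $U$ and $V$.

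Since $\beta[x]$ and $\beta[y]$ are distinct branches of the $\omega$-levelled tree $T$, there is a least $n<\omega$ at which they differ; let $t_x$ be the unique element of $\beta[x]\cap T(n)$ and $t_y$ the unique element of $\beta[y]\cap T(n)$. By definition of $\beta$, see \eqref{eq:sel-top-1st:1}, one has $x\in \mathscr{S}(t_x)$ and $y\in \mathscr{S}(t_y)$, and $t_x\neq t_y$.

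Now I invoke the two hypotheses in turn. Because $g$ is $\mathscr{S}$-invariant, the level-$n$ family $\{\mathscr{S}(t):t\in T(n)\}$ is $g$-decisive, so either $\mathscr{S}(t_x)<_g \mathscr{S}(t_y)$ or $\mathscr{S}(t_y)<_g \mathscr{S}(t_x)$; since $x<_g y$ with $x\in \mathscr{S}(t_x)$ and $y\in \mathscr{S}(t_y)$, only the first alternative is compatible, hence $\mathscr{S}(t_x)<_g \mathscr{S}(t_y)$. Because the sieve is discrete, this level-$n$ family is a discrete cover of $X$, so $x$ has an open neighbourhood $U$ meeting at most one of its members, necessarily $\mathscr{S}(t_x)$; since the family also covers $X$, every point of $U$ lies in $\mathscr{S}(t_x)$, i.e.\ $U\subset \mathscr{S}(t_x)$. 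Similarly, $y$ has an open neighbourhood $V\subset \mathscr{S}(t_y)$.

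Combining these, for every $s\in U$ and $t\in V$ one has $s\in \mathscr{S}(t_x)$ and $t\in \mathscr{S}(t_y)$, whence $s<_g t$ by $\mathscr{S}(t_x)<_g \mathscr{S}(t_y)$. This verifies the continuity criterion at $\{x,y\}$. The argument is almost entirely bookkeeping; the only subtle point is the observation that a point in a discrete \emph{cover} has a neighbourhood that is actually contained in (not merely meets) the unique member it hits, which is what lets discreteness upgrade to inclusion and makes the level-$n$ split of $\beta[x]$ from $\beta[y]$ propagate to open neighbourhoods.
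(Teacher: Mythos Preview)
Your proof is correct and follows essentially the same approach as the paper's: pick a level $n$ at which the branches differ, use $\mathscr{S}$-invariance to get $\mathscr{S}(t_x)<_g \mathscr{S}(t_y)$, and use discreteness of the sieve to produce the open neighbourhoods witnessing continuity. The only cosmetic difference is that the paper observes directly that $\mathscr{S}(t_x)$ and $\mathscr{S}(t_y)$ are themselves open (so one may take $U=\mathscr{S}(t_x)$ and $V=\mathscr{S}(t_y)$), whereas you argue for open $U\subset \mathscr{S}(t_x)$ and $V\subset \mathscr{S}(t_y)$ --- but your argument is exactly the verification that the sieve values are open in a discrete cover.
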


\begin{proof}
  Take points $x,y\in X$ with $x<_g y$ and $\beta[x]\neq
  \beta[y]$. Then there exists $n<\omega$ and elements
  $s\in \beta[x]\cap T(n)$ and $t\in \beta[y]\cap T(n)$ such that
  $s\neq t$. Accordingly, $x\in \mathscr{S}(s)$, $y\in \mathscr{S}(t)$
  and $\mathscr{S}(s)<_g \mathscr{S}(t)$ because $g$ is
  $\mathscr{S}$-invariant and $x<_g y$. Since $\mathscr{S}$ is
  discrete, $\mathscr{S}(s)$ and $\mathscr{S}(t)$ are open
  sets. Therefore, $g$ satisfies the continuity condition at
  $\{x,y\}\in \mathscr{F}_2(X)$, see \cite[Theorem
  3.1]{gutev-nogura:01a}.
\end{proof}

Let $\mathscr{S}:T\sto X$ be a discrete sieve on $X$ such that
$\beta[x]\neq \beta[y]$, whenever $x,y\in X$ with $x\neq y$, i.e.\ the
sieve $\mathscr{S}$ is \emph{separating the points of $X$}. Then
according to Proposition \ref{proposition-sel-top-35:1}, each
$\mathscr{S}$-invariant weak selection $g\in\sel_2(X)$ is
continuous. This situation is however very restrictive being
applicable only for totally disconnected spaces. Indeed, if $X$ has a
non-degenerate component, then it cannot have a sieve which separates
its points. For such spaces, the best that can be achieved is to
separate the components. Namely, we shall say that a sieve
$\mathscr{S}:T\sto X$ is \emph{separating the components of $X$} if
for every two different components $P,Q\in \mathscr{C}[X]$ there are
different elements $s,t\in T$ with $P\subset \mathscr{S}(s)$ and
$Q\subset \mathscr{S}(t)$. In terms of the map
$\beta:X\to \mathscr{B}(T)$, see \eqref{eq:sel-top-1st:1}, this can be
expressed by the property that
$\mathscr{C}[x]=\bigcap_{t\in \beta[x]}\mathscr{S}(t)$, whenever
$x\in X$. The sieve constructed in Theorem \ref{theorem-sel-top-23:1}
has this property.

\begin{lemma}
  \label{lemma-sel-top-34:1}
  Let $(X,\leq)$ be a suborderable metrizable space which has
  infinitely many components, $\rho$ be a convex metric on $X$
  compatible with the topology of $X$, and
  $\mathscr{S}:T\to \mathscr{D}[X]$ be a discrete sieve on $X$ as in
  Theorem \ref{theorem-sel-top-23:1}. Then for every clopen set
  $U\subset X$ and $x\in U$, there exists $t\in \beta[x]$ with
  $\mathscr{S}(t)\subset U$.
\end{lemma}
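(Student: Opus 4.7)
The plan is to show that $\mathscr{S}(t_n)\subset U$ for all sufficiently large $n$, where $t_n$ denotes the unique element of $\beta[x]\cap T(n)$. Since $U$ is clopen and $\mathscr{C}[x]$ is connected with $x\in\mathscr{C}[x]\cap U$, we have $\mathscr{C}[x]\subset U$; Proposition~\ref{proposition-sel-top-34:1} then yields $\varepsilon_0>0$ with $\Delta(\mathscr{C}[x],\varepsilon_0)\subset U$, and openness of $U$ at $x$ yields $\delta>0$ with $\mathbf{O}(x,\delta)\subset U$. For each $n$, the membership $\mathscr{S}(t_n)\in\mathscr{D}[X,2^{-n}]$ offers a dichotomy: either $\diam(\mathscr{S}(t_n))<2^{-n}$, in which case $\mathscr{S}(t_n)\subset\mathbf{O}(x,2^{-n})\subset U$ whenever $2^{-n}\leq\delta$, or $\mathscr{S}(t_n)\subset\Delta(C_n,2^{-n})$ for some $C_n\in\mathscr{C}_{2^{-n}}[\mathscr{S}(t_n)]$. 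If $\mathscr{C}[x]$ is non-degenerate with $k:=\kappa[\mathscr{C}[x]]$ and $n\geq k$, then $\diam(\mathscr{C}[x])\geq 2^{-k}>2^{-n-1}$, so Proposition~\ref{proposition-sel-top-14:2} applied to $\mathscr{C}[x]\subset\Delta(C_n,2^{-n})$ rules out $C_n\neq\mathscr{C}[x]$; thus $C_n=\mathscr{C}[x]$, and taking $n$ with $2^{-n}\leq\varepsilon_0$ also, we obtain $\mathscr{S}(t_n)\subset\Delta(\mathscr{C}[x],2^{-n})\subset U$.

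The subtle case is $\mathscr{C}[x]=\{x\}$. Assuming the diameter alternative $\diam(\mathscr{S}(t_n))<2^{-n}$ fails for all large $n$ (otherwise we are done as above), we obtain for all large $n$ a component $C_n\in\mathscr{C}_{2^{-n}}[\mathscr{S}(t_n)]$ with $\diam(C_n)\geq 2^{-n}$. Since $\bigcap_n\mathscr{S}(t_n)=\{x\}$, no fixed component can serve as $C_n$ for cofinitely many $n$, so $C_n\neq C_{n+1}$ for infinitely many $n$; fix such an $n$ also satisfying $2^{-n+1}<\delta$. If $C_n\subset\mathscr{S}(t_{n+1})$, then $C_n\in\mathscr{C}_{2^{-n-1}}[\mathscr{S}(t_{n+1})]$, and Proposition~\ref{proposition-sel-top-14:2} applied to $\mathscr{S}(t_{n+1})\subset\Delta(C_{n+1},2^{-n-1})$ forces $C_n=C_{n+1}$, a contradiction; hence $C_n\cap\mathscr{S}(t_{n+1})=\emptyset$. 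Pick any $z\in C_n$; the branch $\beta[z]$ meets $T(n)$ at $t_n$ (from $C_n\subset\mathscr{S}(t_n)$ together with pairwise disjointness of $\{\mathscr{S}(t):t\in T(n)\}$), but meets $T(n+1)$ at some $t'_{n+1}\in\node(t_n)$ with $t'_{n+1}\neq t_{n+1}$. Since $\kappa[C_n]\leq n$, property (4.7) of Theorem~\ref{theorem-sel-top-23:1} applied to $\beta[z]$ with $C=C_n$, branch successor $p=t'_{n+1}$ and sibling $s=t_{n+1}$ yields $\diam(\mathscr{S}(t_{n+1}))\leq 2^{-n+1}$, whence $\mathscr{S}(t_{n+1})\subset\mathbf{O}(x,2^{-n+1})\subset U$.

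The main obstacle is precisely this second case: because $\mathscr{C}[x]$ is degenerate, property (4.7) of Theorem~\ref{theorem-sel-top-23:1} does not directly provide any diameter estimate along $\beta[x]$ itself. The crucial trick is to transfer the estimate from a neighboring branch $\beta[z]$ passing through the large component $C_n\subset\mathscr{S}(t_n)$: such a branch must diverge from $\beta[x]$ exactly at level $n+1$ whenever the large-diameter component switches from $C_n$ to some $C_{n+1}\neq C_n$, and the assumption $\mathscr{C}[x]=\{x\}$ forces these switches to happen infinitely often, thereby letting us drive $\diam(\mathscr{S}(t_{n+1}))$ below any prescribed positive threshold.
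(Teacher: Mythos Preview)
Your overall strategy matches the paper's: split into the cases where $\mathscr{C}[x]$ is non-degenerate versus a singleton, and in the singleton case locate a level where the ``large'' component $C_n$ leaves $\mathscr{S}(t_{n+1})$ so that \ref{item:3} of Theorem~\ref{theorem-sel-top-23:1} bounds $\diam(\mathscr{S}(t_{n+1}))$. The non-degenerate case and the endgame of the singleton case are handled correctly.

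There is, however, a genuine gap in the singleton case. You write ``Since $\bigcap_n\mathscr{S}(t_n)=\{x\}$, no fixed component can serve as $C_n$ for cofinitely many $n$''. But the equality $\bigcap_{t\in\beta[x]}\mathscr{S}(t)=\mathscr{C}[x]$ is exactly what the lemma is establishing (it is equivalent to: every clopen neighbourhood of $x$ contains some $\mathscr{S}(t_n)$), so invoking it here is circular. The paper avoids this by arguing directly: if $C_n=C$ for all large $n$, then $C\subset\mathscr{S}(t_n)$ and hence $x\in\mathscr{S}(t_n)\subset\Delta(C,2^{-n})\subset\mathbf{O}\!\left(C,2^{-n-1}\right)$ for all large $n$; since $C$ is closed this forces $x\in C$, contradicting $\mathscr{C}[x]=\{x\}$ and $\diam(C)\geq 2^{-n}>0$. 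With this replacement your proof goes through, and in fact the paper's version is slightly more streamlined: it fixes a single $C=C_\kappa$, takes the \emph{maximal} $n$ with $C\subset\mathscr{S}(t_n)$, and applies \ref{item:3} at that step, rather than tracking a sequence $(C_n)$ and looking for a switch $C_n\neq C_{n+1}$.
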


\begin{proof}
  Let $U\subset X$ be a clopen set, $x\in U$ and
  $\beta[x]=\{t_n\in T(n): n<\omega\}\in \mathscr{B}(T)$ be the
  associated branch as in \eqref{eq:sel-top-1st:1}.  If
  $\mathscr{C}[x]\in \mathscr{C}_*[X]$, then
  $\mathscr{C}[x]\in \mathscr{C}_{2^{-\kappa}}[X]$ for some
  $\kappa<\omega$. Since
  $\mathscr{C}[x]\subset \mathscr{S}(t_n)\in \mathscr{D}[X,2^{-n}]$,
  $n\geq \kappa$, it follows from \eqref{eq:sel-top-23:1} and
  \eqref{eq:sel-top-29:2} that
  $\mathscr{S}(t_n)\subset \Delta(\mathscr{C}[x],2^{-n})\subset
  \mathbf{O}(\mathscr{C}[x], 2^{-n})$, for every
  $n\geq\kappa$. Therefore, by Proposition
  \ref{proposition-sel-top-34:1}, $\mathscr{S}(t_n)\subset U$ for some
  $n\geq \kappa$, because $\mathscr{C}[x]\subset U$ and $U$ is
  open.\smallskip

  If $\mathscr{C}[x]$ is a singleton, then $\mathscr{C}[x]=\{x\}$. In
  this case, it suffices to show that for every $\kappa<\omega$ there
  exists $m\geq \kappa$ such that
  $\diam(\mathscr{S}(t_m))\leq 2^{-\kappa+1}$. So, take any
  $\kappa<\omega$. If $\diam(\mathscr{S}(t_\kappa))\geq 2^{-\kappa}$,
  then $\mathscr{S}(t_\kappa)\subset \Delta(C,2^{-\kappa})$ for some
  $C\in \mathscr{C}_{2^{-\kappa}}[\mathscr{S}(t_\kappa)]$ because
  $\mathscr{S}(t_\kappa)\in \mathscr{D}[X,2^{-\kappa}]$, see
  \eqref{eq:sel-top-29:2}.  For this $C$, for the same reason, we have
  that
  $\mathscr{S}(t)\subset \Delta(C,2^{-n})\subset \mathbf{O}(C,2^{-n})$
  for every $n\geq \kappa$ and $t\in T(n)$ with
  $C\subset \mathscr{S}(t)$. However, $C\neq \mathscr{C}[x]=\{x\}$
  and, therefore, $x\notin \mathbf{O}(C,2^{-n})$ for some $n>
  \kappa$. In other words, $\mathscr{S}(t_n)\cap C=\emptyset$ for some
  $n>\kappa$ and we may take the maximal $n\geq \kappa$ for which
  $C\subset \mathscr{S}(t_n)$. Then $t_{n+1}\in \node(t_n)$ and
  $\mathscr{S}(t_{n+1})\cap C=\emptyset$. Hence, by \ref{item:3} of
  Theorem \ref{theorem-sel-top-23:1},
  $\diam(\mathscr{S}(t_{n+1}))\leq 2^{-n+1}\leq 2^{-\kappa+1}$.
\end{proof}

Finally, we are also ready for the proof of Theorem
\ref{theorem-sel-top-19:1}.

\begin{proof}[Proof of Theorem \ref{theorem-sel-top-19:1}]
  Let $\leq$ be a compatible linear order on $X$, and $\rho$ be an
  admissible convex metric on $X$. Also, let
  $\mathscr{S}:T\to \mathscr{D}[X]$ be a discrete sieve on $X$ with
  the properties in Theorem \ref{theorem-sel-top-23:1}. We are going
  to construct an $\mathscr{S}$-invariant continuous weak selection
  $g$ for $X$ such that $\mathscr{T}_g$ is with the topology
  $\mathscr{T}$ of $X$. To this end, for convenience, set
  $\node(\emptyset)=T(0)$ and $T_\emptyset=T\cup\{\emptyset\}$ so that
  we may represent the tree partitioned by its nodes, namely
  $T=\bigcup_{t\in T_\emptyset}\node(t)$. Then, a weak selection
  $g\in \sel_2(X)$ is $\mathscr{S}$-invariant if and only if each
  family $\{\mathscr{S}(s):s\in \node(t)\}$, $t\in T_\emptyset$, is
  $g$-decisive. This interpretation allows to construct
  sieve-invariant weak selections by using an inductive
  argument.\smallskip 
  
  Let $\mathscr{S}(\emptyset)=X\in \mathscr{D}[X]$ so that we may
  consider the sieve $\mathscr{S}$ as a mapping
  $\mathscr{S}:T_\emptyset\to \mathscr{D}[X]$. If
  $\mathscr{C}_{2^{-n}}[\mathscr{S}(t)]\neq \emptyset$ for some
  $t\in T(n)$, then $\mathscr{C}_{2^{-n}}[\mathscr{S}(t)]$ is a
  singleton because
  $\mathscr{S}\uhr T(n):T(n)\to \mathscr{D}\left[X,2^{-n}\right]$, see
  \eqref{eq:sel-top-29:2} and Proposition
  \ref{proposition-sel-top-14:2}. In this case, let $C[t]$ be the
  unique element of $\mathscr{C}_{2^{-n}}[\mathscr{S}(t)]$ and
  $\pi(t)\in \node(t)$ the element with
  $C[t]\subset \mathscr{S}(\pi(t))$. Otherwise, if
  $\mathscr{C}_{2^{-n}}[\mathscr{S}(t)]=\emptyset$, for technical
  reasons only, set $C[t]=\emptyset$. In these terms, for each
  $t\in T_\emptyset$ we will construct a weak selection $\sigma_t$ for
  $\{\mathscr{S}(s): s\in\node(t)\}$ such that for every $s,s^*\in
  \node(t)$, 
  \begin{enumerate}[label=\upshape{(\thesection.\arabic*)}]
  \addtocounter{enumi}{1}
  \item\label{item:9} $\mathscr{S}(s)\in \mathscr{T}_{\sigma_t* h_t}$,
    whenever $h_t\in\sel_2(\mathscr{S}(t))$;\smallskip
  \item\label{item:10} If $C[t]\neq\emptyset$ and $s\neq \pi(t)\neq
    s^*$, then\smallskip 
    \begin{enumerate}[label=(5.3\alph*)]
    \item\label{item:sel-top-36:1}
      $\mathscr{S}(s)<_{\sigma_t} \mathscr{S}(\pi(t))$ provided
      $\mathscr{S}(s)< C[t]$ and
      $\mathscr{S}(\pi(t))<_{\sigma_t} \mathscr{S}(s^*)$ provided
      $C[t]<\mathscr{S}(s^*)$; 
    \item\label{item:sel-top-36:2}
      $\mathscr{S}(s)<_{\sigma_t} \mathscr{S}(s^*)$ whenever
      $\mathscr{S}(s)<C[t]<\mathscr{S}(s^*)$. 
    \end{enumerate}
  \end{enumerate}
  \addtocounter{equation}{2}

  According to Lemma \ref{lemma-sel-top-4:1},
  $\{\mathscr{S}(t): t\in \node(\emptyset)\}$ has a weak selection
  $\sigma_\emptyset$ with
  $\mathscr{S}(t)\in \mathscr{T}_{\sigma_\emptyset* h}$, for every
  $t\in\node(\emptyset)$ and $h\in \sel_2(X)$. The construction can be
  carried on by induction. Namely, take $t\in\node(\emptyset)$. If
  $C[t]=\emptyset$, then for the same reason,
  $\{\mathscr{S}(s): s\in \node(t)\}$ has a weak selection $\sigma_t$
  such that \ref{item:9} holds for this particular $t$. Suppose that
  $C[t]\neq\emptyset$, in which case
  $\mathscr{S}(t)\subset \Delta(C[t],2^0)$ because
  $\mathscr{S}(t)\in \mathscr{D}[X,2^0]$, see \eqref{eq:sel-top-29:2}
  and Theorem \ref{theorem-sel-top-23:1}.  If
  ${\ell(C[t])=0= r(C[t])}$, it follows from \eqref{eq:sel-top-23:1}
  that $\mathscr{S}(t)=C[t]$. Therefore, $\node(t)=\{\pi(t)\}$ and
  $\mathscr{S}(\pi(t))=\mathscr{S}(t)$, so \ref{item:9} and
  \ref{item:10} hold in a trivial way. If $\ell(C[t])=1\neq r(C[t])$,
  then $C< C[t]$ for every component
  $C\in \mathscr{C}[\mathscr{S}(t)]$ with $C\neq C[t]$. Hence, by
  \ref{item:3} and \ref{item:4} of Theorem \ref{theorem-sel-top-23:1},
  $\big|\node(t)\setminus\{\pi(t)\}\big|\geq 6$ and
  $\mathscr{S}(s)< C[t]$ for each $s\in
  \node(t)\setminus\{\pi(t)\}$. Now, we may apply Lemma
  \ref{lemma-sel-top-32:1} to get a weak selection $\sigma_t$ for
  $\{\mathscr{S}(s):s\in \node(t)\}$ such that \ref{item:9} holds and
  $\mathscr{S}(s)<_{\sigma_t} \mathscr{S}(\pi(t))$ for every
  $s\in \node(t)\setminus\{\pi(t)\}$. Accordingly, \ref{item:10} holds
  as well. The case when $\ell(C[t])=0\neq r(C[t])$ can be handled in
  exactly the same way using Lemma \ref{lemma-sel-top-32:1} and
  reversing the selection relation obtained in that lemma. Finally,
  suppose that $\ell(C[t])=1=r(C[t])$. In this case, by \ref{item:3}
  of Theorem \ref{theorem-sel-top-23:1}, $\mathscr{S}(s)< C[t]$ or
  $C[t]< \mathscr{S}(s)$ for every $s\in\node(t)\setminus \{\pi(t)\}$,
  while by \ref{item:4} of the same theorem,
  \[
    \begin{cases}
      \big|\{s\in \node(t):\mathscr{S}(s)< C[t]\}\big|\geq 6\quad
      \text{and}\\
      {\big|\{s\in \node(t):C[t]< \mathscr{S}(s)\}\big|\geq 6}.
    \end{cases}
  \]
  The weak selection $\sigma_t$ for $\{\mathscr{S}(s):s\in \node(t)\}$
  defined as in Corollary \ref{corollary-sel-top-14:1} is now as
  required in \ref{item:9} and \ref{item:10}.\smallskip

  Having already constructed the selections $\sigma_t$,
  $t\in T_\emptyset$, we finalise the proof by showing that they
  generate a continuous $\mathscr{S}$-invariant weak selection $g$ for
  $X$ with $\mathscr{T}_g=\mathscr{T}$.  Namely, let $x,y\in X$. If
  $\mathscr{C}[x]=\mathscr{C}[y]$, then set $x\leq_g y$ provided that
  $x\leq y$. If $\mathscr{C}[x]\neq \mathscr{C}[y]$, then by Lemma
  \ref{lemma-sel-top-34:1}, there are unique elements
  $t[x,y]\in T_\emptyset$ and $s[x],s[y]\in \node(t[x,y])$ such that
  $s[x]\neq s[y]$ and
  \begin{equation}
    \label{eq:sel-top-29:3}
    \mathscr{C}[x],\mathscr{C}[y]\subset \mathscr{S}(t[x,y]),\
    \mathscr{C}[x]\subset \mathscr{S}(s[x])\  \text{and}\
    \mathscr{C}[y]\subset \mathscr{S}(s[y]).
  \end{equation}
  In this case, set $x<_g y$ provided that
  $\mathscr{S}(s[x])<_{\sigma_{t[x,y]}} \mathscr{S}(s[y])$. Thus, we
  get an $\mathscr{S}$-invariant selection $g\in\sel_2(X)$ such that
  by \ref{item:9}, $\mathscr{S}(t)\in \mathscr{T}_g$ for every
  $t\in T$.\smallskip

  To show that $g$ is continuous, we have to show that it is
  continuous at each pair of points $x,y\in X$ with $x\neq y$. If
  $\mathscr{C}[x]\neq \mathscr{C}[y]$, then this follows from
  Proposition \ref{proposition-sel-top-35:1} because by Lemma
  \ref{lemma-sel-top-34:1}, $\beta[x]\neq \beta[y]$. Hence, the
  verification of continuity of $g$ is reduced to the case when
  $\mathscr{C}[x]=\mathscr{C}[y]$. Below, we will show that this is
  also related to the verification of $\mathscr{T}_g=\mathscr{T}$. To
  this end, let us show that for every non-degenerated component
  $C\in \mathscr{C}[X]$ there exists $t\in T$ with
  $C\subset \mathscr{S}(t)$, such that for every cut point $p\in C$,
  \begin{equation}
    \label{eq:sel-top-34:4}
    \begin{cases}
      (\gets,p)_{\leq}\cap \mathscr{S}(t), (p,\to)_{\leq}\cap
      \mathscr{S}(t)\in \mathscr{T}_g\quad \text{and}\\
      (\gets,p)_{\leq}\cap \mathscr{S}(t)<_g (p,\to)_{\leq}\cap
      \mathscr{S}(t).
    \end{cases}
  \end{equation}

  Let $\kappa=\kappa[C]$ be as in \eqref{eq:sel-top-34:2}, i.e.\ the
  minimal $k< \omega$ with $C\in \mathscr{C}_{2^{-k}}[X]$, see
  \eqref{eq:sel-top-29:1}. Also, let
  $\beta=\{t_n\in T(n): n<\omega\}\in \mathscr{B}(T)$ be the unique
  branch with $C\subset\bigcap_{n<\omega}\mathscr{S}(t_n)$. We will
  show that $\mathscr{S}(t_\kappa)$ is as required in
  \eqref{eq:sel-top-34:4}. So, take a cut point $p\in C$ and
  $x,y\in \mathscr{S}(t_\kappa)$ with $x<p<y$. If $x,y\in C$, then
  $x<_g y$ because on each component, the selection relation $\leq_g$
  is the linear order $\leq$ on $X$. Otherwise, if
  $\mathscr{C}[x]\neq \mathscr{C}[y]$, set
  $n=\max\{k<\omega:\{x,y\}\subset \mathscr{S}(t_k)\}$. Also, let
  $t[x,y],s[x],s[y]\in T$ be as in \eqref{eq:sel-top-29:3}. We now
  have that $n\geq \kappa$ and $t_n=t[x,y]$, so
  $s[x],s[y]\in \node(t_n)$, and the property follows from
  \ref{item:10}. Indeed, if $s[y]=t_{n+1}$, i.e.\
  $y\in \mathscr{S}(t_{n+1})$, then $\mathscr{S}(s[x])< C$ and by
  \ref{item:sel-top-36:1} and \eqref{eq:sel-top-29:3}, $x<_g
  y$. Similarly, $x<_g y$ provided $s[x]=t_{n+1}$. Finally, if
  $s[x]\neq t_{n+1}\neq s[y]$, this follows from
  \ref{item:sel-top-36:2}. Thus, the second part of
  \eqref{eq:sel-top-34:4} holds. To show the first part of this
  property, we essentially repeat the same argument. Namely, to show
  that $(p,\to)_{\leq}\cap \mathscr{S}(t_\kappa)\in \mathscr{T}_g$, it
  suffices to show that
  $(p,\to)_{\leq}\cap \mathscr{S}(t_\kappa)= (p,\to)_{\leq_g}\cap
  \mathscr{S}(t_\kappa)$. So, take a point
  $q\in \mathscr{S}(t_\kappa)$. If $q\in C$, then $p<_g q$ precisely
  when $p< q$. Suppose that $q\notin C$, and let
  $t[p,q], s[p],s[q]\in T$ be as in \eqref{eq:sel-top-29:3}. Then
  $t[p,q]=t_n$ for some $n\geq \kappa$, while
  $s[p]=t_{n+1}\in \node(t_n)$ and $s=s[q]\in \node(t_n)$. According
  to \ref{item:3} of Theorem \ref{theorem-sel-top-23:1},
  $\mathscr{S}(s)<C$ or $C<\mathscr{S}(s)$. Hence, by
  \ref{item:sel-top-36:1}, we have that
  $\mathscr{S}(t_{n+1})<_{\sigma_{t_n}} \mathscr{S}(s)$ precisely when
  $p<q$ because $p\in C$ and $q\in \mathscr{S}(s)$. Finally, by
  \eqref{eq:sel-top-29:3}, this implies that $p<_g q$ precisely when
  $p<q$. This completes the verification of \eqref{eq:sel-top-34:4}.
  \smallskip

  We are now ready to finalise the proof of Theorem
  \ref{theorem-sel-top-19:1}. Namely, let $x,y\in X$ with $x\neq
  y$. If $\mathscr{C}[x]\neq \mathscr{C}[y]$, as mentioned above, $g$
  is continuous at the pair $\{x,y\}\in \mathscr{F}_2(X)$. Suppose
  that $\mathscr{C}[x]=\mathscr{C}[y]$ and $x<y$. Next, take any point
  $p\in C=\mathscr{C}[x]$ with $x<p<y$. Finally, let $t\in T$ be as in
  \eqref{eq:sel-top-34:4} with respect to this component. Then
  $(\gets,p)_{\leq}\cap \mathscr{S}(t)<_g (p,\to)_{\leq}\cap
  \mathscr{S}(t)$ because $p$ is a cut point of $C$. Since
  $(\gets,p)_{\leq}\cap \mathscr{S}(t)$ and
  $(p,\to)_{\leq}\cap \mathscr{S}(t)$ are $\mathscr{T}$-open sets, $g$
  in continuous at $\{x,y\}\in \mathscr{F}_2(X)$ as well. Thus, we
  also get that $\mathscr{T}_g\subset \mathscr{T}$. As for the inverse
  inclusion, let us observe that by Lemma \ref{lemma-sel-top-34:1},
  each $\mathscr{T}$-clopen subset of $X$ is
  $\mathscr{T}_g$-open. This implies that
  $(\gets,x]_{\leq}\in \mathscr{T}_g$ whenever
  $(\gets,x]_{\leq}\in \mathscr{T}$ for some $x\in X$; similarly,
  $[x,\to)_{\leq}\in \mathscr{T}_g$ provided
  $[x,\to)_{\leq}\in \mathscr{T}$. Hence, to show that
  $\mathscr{T}\subset \mathscr{T}_g$, it remains to see that
  $(\gets,x)_{\leq},(x,\to)_{\leq}\in \mathscr{T}_g$ for every
  $x\in X$. So, take points $x,y\in X$ with $x<y$. If
  $\mathscr{C}[x]\neq \mathscr{C}[y]$, then
  $\mathscr{C}[y]\subset (x,\to)_{\leq}$ and by Corollary
  \ref{corollary-sel-top-10:2}, there exists a $\mathscr{T}$-clopen
  set $U\subset X$ with
  $\mathscr{C}[y]\subset U\subset (x,\to)_{\leq}$. Hence, by Lemma
  \ref{lemma-sel-top-34:1}, there also exists $t\in T$ with
  $\mathscr{C}[y]\subset \mathscr{S}(t)\subset U\subset
  (x,\to)_{\leq}$. Accordingly, $y$ is a $\mathscr{T}_g$-interior
  point of $(x,\to)_{\leq}$. Finally, suppose that
  $\mathscr{C}[x]=\mathscr{C}[y]$ and take a point
  $p\in C=\mathscr{C}[x]$ with $x<p<y$. Also, let $t\in T$ be as in
  \eqref{eq:sel-top-34:4} with respect to this component $C$. Since
  $p$ is a cut point of $C$, it follows from \eqref{eq:sel-top-34:4}
  that $y\in (p,\to)_{\leq}\cap \mathscr{S}(t)\in
  \mathscr{T}_g$. Since $(p,\to)_{\leq}\subset (x,\to)_{\leq}$, this
  implies again that $y$ is a $\mathscr{T}_g$-interior point of
  $(x,\to)_{\leq}$. Thus, $(x,\to)_{\leq}\in
  \mathscr{T}_g$. Similarly, $(\gets,x)_{\leq}\in \mathscr{T}_g$ and
  the proof is complete.
\end{proof}

\end{document}